\numberwithin{equation}{section}
\theoremstyle{plain}
\newtheorem{theorem}{Theorem}[section]
\newtheorem{corollary}{Corollary}[section]
\newtheorem{lemma}{Lemma}[section]
\def \a{\alpha}
\def \ua{\underline{\alpha}}
\def \u{\bar{u}}
\def \ur{\underline{r}}
\def \br{\bar{r}}
\def \v{\varphi}
\def \vn{\varphi_n}
\def \e{\epsilon}
\def \be{\bar{\epsilon}}
\newcommand*{\email}[1]{%
    \normalsize\href{mailto:#1}{#1}\par
    }
\begin{document}
\title{Estimating the logarithm of
characteristic function and stability
parameter for symmetric stable laws}

\author{J\"uri Lember\thanks{Estonian institutional research funding IUT34-5, Estonian Research Council grant PRG865}}
\affil{\normalsize Institute of Mathematics and Statistics,
University of Tartu,  Estonia,\\\email{jyril@ut.ee}}

\author{Annika Krutto\thanks{Estonian institutional research funding IUT34-5}}
\affil{\normalsize Institute of Mathematics and Statistics, University of Tartu,  Estonia\\
 Department of Biostatistics, University of Oslo, Norway,\\\email{krutto@ut.ee}} 

\setlength\parindent{0pt}
\setlength{\parskip}{1em}
\date{\vspace{-5ex}}
\maketitle \thispagestyle{empty}
\begin{abstract}
Let $X_1,\ldots,X_n$ be an i.i.d. sample from symmetric stable distribution with stability parameter $\alpha$ and scale parameter $\gamma$. Let $\varphi_n$ be the empirical characteristic function.   We prove an uniform large deviation inequality: given preciseness $\epsilon>0$ and probability $p\in (0,1)$, there exists universal (depending on $\e$ and $p$ but not depending on $\a$ and $\gamma$) constant $\br>0$ so that
$$P\big(\sup_{u>0:r(u)\leq \br}|r(u)-\hat{r}(u)|\geq \e\big)\leq p,$$
where $r(u)=(u\gamma)^{\a}$ and $\hat{r}(u)=-\ln|\varphi_n(u)|$. As an applications of the result, we show how it can be used in estimation unknown stability parameter $\a$.
\end{abstract}
\textbf{Keywords}: Stable laws, large deviation inequalities, parameter estimation;

\textbf{MSC codes}: Stable laws {60E07}; Confidence regions {62F25}

\section{Introduction and preliminaries}
{Let $X_1,\ldots,X_n$ be an i.i.d. sample from   stable law with  characteristic function
$\varphi(u)=\exp\left[-(\gamma|u|)^{\a}+i\omega(u)\right]$,
where $\omega(u)=u[\beta \gamma \tan \frac {\pi \alpha}{2} (|\gamma u|^{\alpha-1}-1)+\delta]$ for $\alpha \neq 1$ and $\omega(u)=u[-\beta  \gamma \frac {2}{\pi}
    \ln(\gamma|u|)+\delta]$  for $\alpha=1$,  and $\alpha \in (0,2]$, $\beta \in [-1,1]$, $\gamma > 0$, $\delta \in \mathbb{R}$ are (unknown) stability, skewness,  scale and shift parameters, respectively. } Basic properties of stable distributions can be found in \cite{zolotarev1986,Samorodnitsky1994,Nolan2018}. Let $F_n$ be the  empirical distribution function, and
 $\varphi_n$ the empirical characteristic function, i.e.
\begin{equation}\label{emp}
 \v_n(u)=\int_{\mathbb{R}}\exp\{iux\}\operatorname{d}F_n(x),\quad u\in \mathbb{R}.
 \end{equation}  Let  $r(u)=-\ln|\varphi(u)|=(\gamma|u|)^{\a}$ and $\hat{r}(u)=-\ln|\varphi_n(u)|$. {Estimating the parameters of  stable law  is a notoriously hard problem} (see, e.g.,  \cite[Section 2]{Nolan2001}, \cite[Chapter 4]{zolotarev1986}).
 Simple empirical characteristic function based {closed form} estimates were proposed in \cite{Press1972}. In particular, the stability parameter estimator is
 \begin{equation}\label{hinn0}
\hat{\a}={\ln \hat{r}(u_1)-\ln \hat{r}(u_2)\over \ln u_1-\ln u_2},
\end{equation}
where $0<u_2<u_1$ are fixed arguments and $\hat{r}(u)=-\ln|\vn(u)|$.
Since for any $u$,   $\hat{r}(u)\to (\gamma|u|)^{\a}$, a.s., we see
that any choice of  {$0<u_2<u_1$} gives consistent estimator. The same points $0<u_2<u_1$ can be used to give consistent  closed form estimators also to other parameters $\gamma$, $\beta$, $\delta$ of stable law
(see \cite[Theorem 1]{Krutto2016}). Despite the asymptotic consistency holds for any pair $u_1,u_2$, in
practice the right choice of $u_1$ and $u_2$ is crucial and the universal selection  of these values has remained unsolved (e.g., \cite{Paulson1975,Bibalan2017CharacteristicFB, kakinaka2020flexible}).  Recently, \cite{Krutto2018} {suggests that the  choice of $u_1$ and $u_2$ should be fixed based on the preciseness of $\hat{\a}$, that is, on the preciseness of $\hat{r}(u)$.} Clearly, the
preciseness of $\hat{\a}$ depends on how well  $\hat{r}(u_i)$ and
estimates $r(u_i)$ for $i=1,2$.
\begin{figure}[!htp] \centering{\includegraphics[height=0.25\textheight, width=1\textwidth]{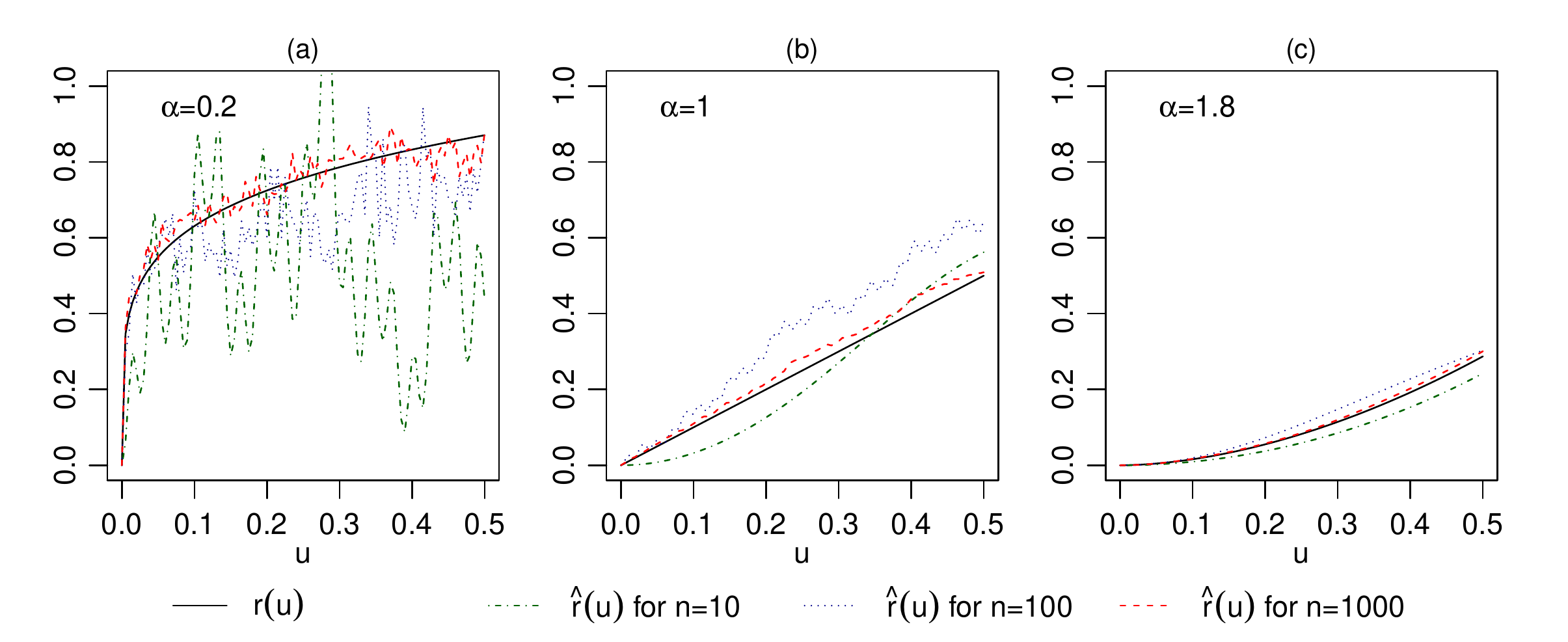}}\caption{{The values of $r(u)=u^\a$ vs $\hat{r}(u)$ of single replicates (simulated with} \cite{STABLER}) of stable law with $\gamma=1$, $\delta=0$, $\beta=0$ for $\a=0.2$ in (a), $\a=1$ in (b) and $\a=1.8$ in (c) .}\label{fig1}
\end{figure}
\begin{figure}[htp]
\centering
\includegraphics[height=0.25\textheight, width=1\textwidth]{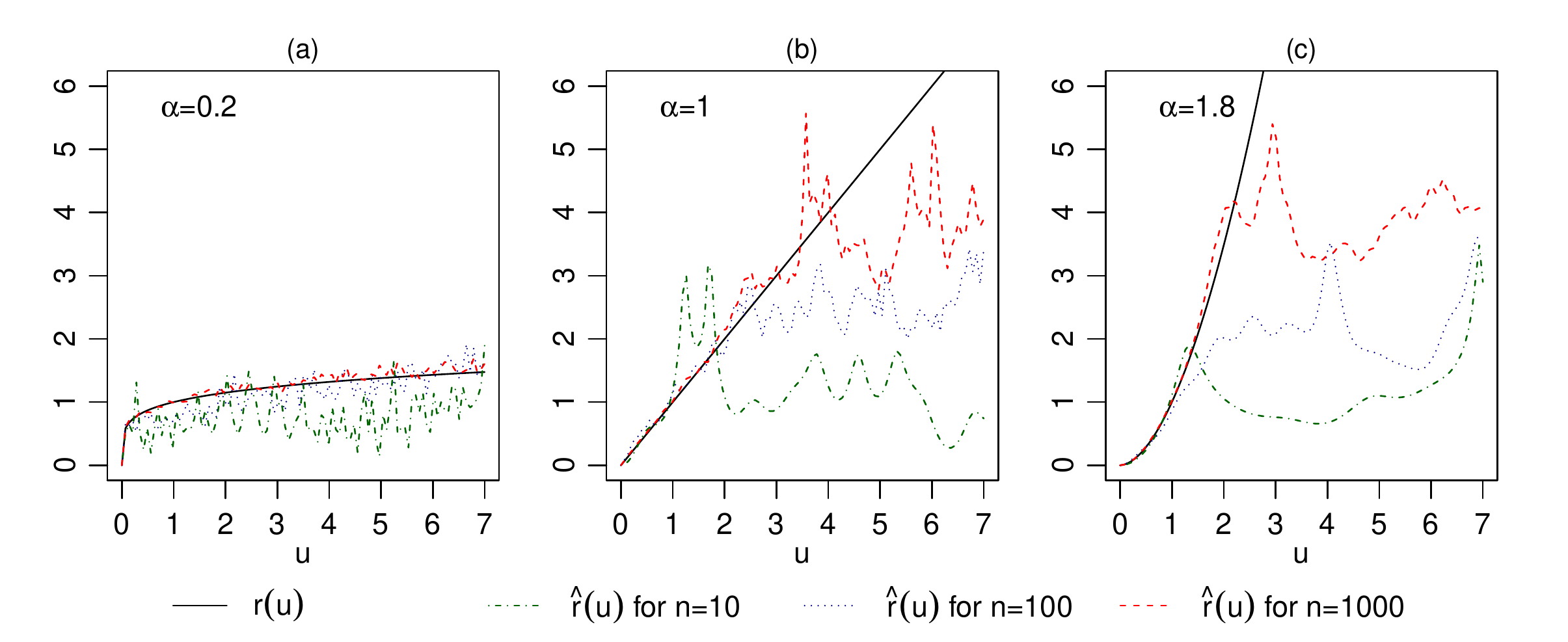}\caption{The values of $r(u)=u^\a$ vs $\hat{r}(u)$ of single replicates (simulated with \cite{STABLER}) of stable law with $\gamma=1$, $\delta=0$, $\beta=0$ for $\a=0.2$ in (a), $\a=1$ in (b) and $\a=1.8$ in (c) .}\label{fig2}
\end{figure}

Figure \ref{fig1} and Figure \ref{fig2} show that
$\hat{r}(u)$ is relatively accurate estimate of $r(u)$ only in a small
interval $(0,\u]$, where $\u$ obviously depends on sample size $n$,
but unfortunately also on $\alpha$ -- the smaller $\a$, the smaller
also $\u$. So, there seems not to exist an universal (that applies for any $\a\in (0,2]$)  upper bound $\u$ so that
$\sup_{u\in (0,\u]}|\hat{r}(u)-r(u)|$ were small for any $\alpha$ even when $\gamma>0$ is known.  It is clearly evident from {Figure} \ref{fig1} and Figure \ref{fig2}, that too
big $u_2$ makes the estimate of $\a$ very imprecise. Observe that {$0<u_2<u_1$} cannot also be very small, because then  {$\ln u_1-\ln u_2$} is also very small  and that affects the preciseness of $\hat{\a}$ even when $\hat{r}(u_i)\approx r(u_i)$, $i=1,2$.  On the other hand, Figure \ref{fig1} and Figure \ref{fig2} as well as simulations in \cite{Krutto2018} suggest that despite the possible non-existence of universal $\u$, there might exists an universal (not depending on $\alpha$ and $\gamma$) $\br$ so that
$\sup_{0<u: r(u)\leq \br}|{\hat{r}}(u)-r(u)|$
is relatively small.

Our main theoretical result states that for symmetric \footnote{{For mathematical tractability in formulas, in particular in tail estimation in} \eqref{LL} and \eqref{Ka}, we provide  our results for symmetric stable laws. Similar construction of proof can be applied for general stable laws.}
stable laws such universal $\br$ exists.

\begin{theorem}\label{thm} Let $X_1,\ldots,X_n$ be an i.i.d. sample from symmetric stable law. Fix $\e>0$, $p\in (0,1)$. Then there exists $n_o(\e,p)<\infty$ such that
 for every $\alpha\in (0,2]$ and $n>n_o$
\begin{equation}\label{v}
P\Big( \sup_{0<u: r(u)\leq
\br}|r(u)-\hat{r}(u)|>\e\Big)\leq p,\end{equation} where
$\bar{r}(\e,p,n)>0$ is independent of $\a$ and $\gamma$.\end{theorem}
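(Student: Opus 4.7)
The plan is to transfer the uniform bound on $|\hat{r}-r|$ into a uniform bound on $|\vn-\v|$, and then to establish the latter on an $\a$-independent bounded $u$-interval by a standard grid plus Hoeffding argument. The non-trivial point, and the only place where the symmetric-stable assumption is essential, is bounding the oscillation of $\vn$ between adjacent grid points uniformly in $\a\in(0,2]$; this is where I expect the main obstacle, and where the symmetric-stable tail estimates \eqref{LL}, \eqref{Ka} will enter.

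First, by replacing the sample by $X_k/\gamma$ I may assume $\gamma=1$, so that $r(u)=u^{\a}$ and $\v(u)=e^{-u^{\a}}$. Restricting $\br$ to lie in $(0,1]$ gives $\br^{1/\a}\le\br^{1/2}$ for every $\a\in(0,2]$, hence
\[
\{u>0:\,r(u)\le\br\}\subseteq (0,U],\qquad U:=\br^{1/2},
\]
is contained in a bounded interval whose length does not depend on $\a$. On this set $\v(u)\ge e^{-\br}$, and the Lipschitz constant of $\log$ on $[\tfrac12 e^{-\br},\,1+\tfrac12 e^{-\br}]$ is at most $2e^{\br}$. Hence once $\sup_{0<u\le U}|\vn(u)-\v(u)|\le\delta$ with $\delta:=\tfrac12\e\, e^{-\br}$, the modulus $|\vn(u)|$ is automatically bounded below by $e^{-\br}/2>0$ on the set, and $|\hat{r}(u)-r(u)|\le\e$ throughout $\{r(u)\le\br\}$. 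So it suffices to produce $\br\in(0,1]$ and $n_o$ such that $P(\sup_{0<u\le U}|\vn(u)-\v(u)|>\delta)\le p$ for all $\a\in(0,2]$ and $n\ge n_o$.

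Next, choose a grid $0=u_0<u_1<\cdots<u_N=U$ of mesh $\eta$ (to be fixed below). The summands $e^{iu_j X_k}-\v(u_j)$ have modulus at most $2$, so Hoeffding's inequality applied to the real and imaginary parts of $\vn(u_j)-\v(u_j)$ gives $P(|\vn(u_j)-\v(u_j)|>\delta/3)\le 4\exp(-cn\delta^2)$, and a union bound over the $N\le U/\eta$ grid points makes this at most $p/2$ as soon as $n$ exceeds some $n_o=n_o(\e,p,\br,\eta)$; this $n_o$ does not depend on $\a$ since $U$ and $\eta$ do not.

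The main obstacle is the interpolation between grid points. For $u\in[u_j,u_{j+1}]$ one has
\[
|\vn(u)-\vn(u_j)|\le\frac{1}{n}\sum_{k=1}^{n}\min\bigl(2,\eta|X_k|\bigr),\qquad |\v(u)-\v(u_j)|\le|u^{\a}-u_j^{\a}|,
\]
and both need to be at most $\delta/3$ with high probability, uniformly in $\a$. The deterministic term $|u^{\a}-u_j^{\a}|$ is at most $\eta^{\a}$ for $\a\le 1$ (subadditivity of $x\mapsto x^{\a}$) and at most $\a U^{\a-1}\eta\le 2\eta$ for $\a>1$ (mean value, using $U\le 1$), so it is small as $\eta\to 0$ uniformly in $\a$. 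The random part is handled by a Markov bound: it suffices to make $E\min(2,\eta|X|)$ substantially smaller than $\delta$, uniformly in $\a$. Writing
\[
E\min(2,\eta|X|)=\int_0^{2} P(|X|>s/\eta)\,ds
\]
and invoking the symmetric-stable tail bounds $P(|X|>t)\le C\min(1,t^{-\a})$ from \eqref{LL} and \eqref{Ka}, whose constants stay bounded for $\a\in(0,2]$, one obtains an upper bound of order $\eta^{\min(\a,1)}$ (up to a harmless logarithmic factor near $\a=1$). This tends to $0$ uniformly in $\a$ as $\eta\to 0$; the only potentially delicate regime is $\a$ near $0$, but there $U=\br^{1/\a}$ is already so small that the oscillation is trivial. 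Fixing $\eta$ small enough to control the deterministic and mean bounds uniformly in $\a$, and then $n_o$ large enough to absorb the Hoeffding and Markov contributions into $p/2+p/2$, completes the proof.
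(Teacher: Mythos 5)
Your route (reduce to $\sup_u|\vn(u)-\v(u)|$ on an $\a$-free interval, then grid points via Hoeffding plus an oscillation bound between grid points) is genuinely different from the paper's, which reduces everything to $\|F_n-F\|$ via a Cs\"org\H{o}-type tail/integration-by-parts decomposition and the Dvoretzky--Kiefer--Wolfowitz inequality. The reduction to $\gamma=1$, the observation that $\br\le 1$ forces $\{r(u)\le\br\}\subseteq(0,\br^{1/2}]$, and the Hoeffding/union-bound step are all fine. But there is a genuine gap exactly where the whole difficulty of the theorem sits: uniformity as $\a\to 0$. Your deterministic oscillation bound over a grid cell is $\eta^{\a}$ for $\a\le1$, and the claim that this ``is small as $\eta\to0$ uniformly in $\a$'' is false: for any fixed $\eta\in(0,1)$, $\sup_{\a\in(0,1]}\eta^{\a}=1$, so over the first cell $(0,\eta]$ the oscillation of $\v(u)=e^{-u^{\a}}$ approaches $1-e^{-1}$ as $\a\to0$ and cannot be beaten down by shrinking $\eta$. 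The random part has the same defect: $E\min(2,\eta|X|)=\int_0^2P(|X|>s/\eta)\,ds\to 2$ as $\a\to0$ for every fixed $\eta$, because $P(|X|>t)\to1$ for fixed $t$ when $\a\to0$. Your parenthetical rescue (``there $U=\br^{1/\a}$ is already so small that the oscillation is trivial'') points in the right direction but conflates $U=\br^{1/2}$, over which you actually take the supremum and over which the oscillation is \emph{not} trivial, with the true set $(0,\br^{1/\a}]$; and it is invoked only for the random term, not for the deterministic one, where your stated uniform claim is simply wrong.

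The gap is closable, but closing it is the real content. You must take the supremum only over $(0,\br^{1/\a}]$ (the number of grid cells is still at most $\br^{1/2}/\eta$, so the union bound survives); there the deterministic oscillation is at most $\sup_{u\le\br^{1/\a}}u^{\a}=\br$, and the truncated moment satisfies $E\min\bigl(2,\br^{1/\a}|X|\bigr)\le\int_0^2\min\bigl(1,2L\br s^{-\a}\bigr)ds\le C\,L\br\bigl(1+\ln(1/(L\br))\bigr)$ uniformly in $\a\in(0,2]$. Both are then controlled by choosing $\br$ small as a function of $\e$ and $p$ \emph{alone}, independently of $n$ --- which is precisely the phenomenon the paper isolates (its $\br$ stays below $W(\e/(4L))<1$ for all $n$, and Lemma \ref{lemmaA} is devoted to the limit $\a\to0$). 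With that modification your argument goes through and is arguably more elementary than the paper's (Hoeffding and a first-moment truncation instead of DKW and the analysis of the implicit function $r_n(\a)$), at the cost of a worse constant $\br$ from the Markov step. As written, however, the uniform-in-$\a$ claim on which the interpolation step rests does not hold.
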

Theorem is proved in Subsection \ref{sec:main}. From  Theorem \ref{thm}
it follows that with probability $1-p$, $|r(u_i)-\hat{r}(u_i)|\leq
\epsilon$, given $n$ is big enough and $u_i$ is cosen such that $r(u_i)\leq \br$, $i=1,2$.
Therefore, in order to apply (\ref{hinn0}), it makes sense not to fix arguments $u_1$ and $u_2$, but
the values $\hat{r}(u_1)$ and $\hat{r}(u_2)$ instead. The obtained
estimate is then
\begin{equation}\label{hinn}
\hat{\a}={\ln (\br-\e)-\ln (\ur+\e) \over \ln u_1-\ln u_2},\end{equation}
where, $\e$, $\br$ and $\ur$ are carefully chosen constants and
\begin{equation}\label{u}
u_1=\inf\{u: \hat{r}(u)=\br-\e\},\quad u_2=\sup\{u:
\hat{r}(u)=\ur+\e\}.
\end{equation}
The estimates of parameters $\a, \beta, \gamma,\delta$ at $u_1$ and $u_2$ based on $\br-\e=0.5$ and $\ur+\e=0.1$ were
proposed and studied in \cite{Krutto2018}. The current article thus provides theoretical justification to the such method of  argument selection.

{Note that} estimating the parameters via two points $u_1$ and $u_2$ as in \eqref{hinn0}  deserves more attention in the
recent literature and specifying $u_1$ and $u_2$ via $\hat{r}$ function as in \eqref{u} is not
the only option. In \cite{Bibalan2017CharacteristicFB}, $u_1=1 $ and  $u_2$ is taken such that the distance between
 Cauchy ($\alpha=1$) and Gaussian ($\alpha=2$) characteristic functions at $u_2$ were maximal.
The idea of maximizing the distance between characteristic functions is further developed in
\cite{kakinaka2020flexible}, where an iterative 8-step algorithm for specifying $u_1$ and $u_2$ is proposed.
Although the  idea of maximizing discrepancy between characteristic functions (or rather
between $r$-functions) is quite natural, and  the simulations in \cite{kakinaka2020flexible} show good behaviour
 of that choice in practice,
the proposed algorithm in \cite{kakinaka2020flexible} is still ad hoc in nature and lacks theoretical
justification.  In particular, it is not clear that it allows to choose $u_1$ and $u_2$ so that the
inequality \eqref{pr}  below holds.

The proof of the
Theorem \ref{thm} is constructive, but the goal of it is to  show
that universal $\br$ exists, not to optimize the constant, i.e. to
find the biggest possible $\br$ and smallest possible $n_o$. It
means that the $\br$ constructed in the proof is probably too small
for practical use. Although, for every $u$, $\gamma$ and $\a$,
$\hat{r}(u)\to r(u)$, a.s. as $n$ grows, our upper bound satisfies
$\br(\e,p,n)\leq w_o<1$, where $w_o$ is a constant depending on
$\e$. Thus $\br$ is always bounded away from 1 and
does not increase to the infinity as $n$ grows. This need not
necessarily be the deficiency of the proof, rather than necessary
property. To see that, assume  the inequality (\ref{v}) holds with
some $\br>1$. Then it follows that
$$P\Big( \sup_{u\in (0,\u]}|r(u)-\hat{r}(u)|>\e\Big)\leq p,$$
where $\u=\sqrt{\br \over \gamma}$, so that the universal upper bound (that applies for any $\a$)
$\u$ would exists. However, there is no evidence at all that such an
$\u$ exists, suggesting that the a.s. convergence $\hat{r}(u)\to
r(u)$ is not uniform over small $\alpha$ -- for every $u$ and $n$
there exists $\alpha$ small enough so that the difference
$|r(u)-\hat{r}(u)|$ is still big. If so, then $\br$ must
always remain smaller than 1.
The situation is different, when we bound the unknown stability
parameters $\a$ below from zero, i.e. we assume the existence of
$\ua>0$ (which can be arbitrary small) such that the unknown parameter $\a$ belongs to $[\ua,2]$.
In this case the upper bound $\br(\e,p,n,\ua)$ satisfies $\lim_n
\br(\e,p,n,\ua)=\infty$ (Corollary \ref{kolla}), and in this case also
the upper bound $\u$ exists. The existence of $\ua$ is common
assumption in practice ({e.g.,}  \cite{McCulloch1986,Kogon1998,Nolan2001} {suggest} $\ua=0.5$) and we keep this additional
assumption in Theorem \ref{thm2}, that provides a uniform bound
similar to (\ref{v}) to the difference of logarithms $|\ln
\hat{r}(u)-\ln r(u)|$. Theorem \ref{thm2} is actually a simple
corollary of Theorem \ref{thm}, but the additional assumption about
the existence of $\ua$ is necessary, because  if $r$ is very small,
then $|\ln \hat{r}-\ln r|$ can be rather big even when $r$ is very
close to $\hat{r}$.
\\\\
Section \ref{sec:hinnang} is devoted to the applications of Theorem
\ref{thm2} in the light of large deviation inequalities for
$\hat{\a}$. We show how the upper bound from
Theorem \ref{thm2} can be used to solve the two basic questions
related with estimate $\hat{\a}$  in (\ref{hinn}):
\begin{itemize}
  \item  Given
precision $\e_1>0$, probability $p$ and lower bound $\ua$, find
$\br,\ur,\e$ (needed to construct $\hat{\a}$ in (\ref{hinn})) and
possibly small sample size $n$ so that
\begin{equation}\label{pr}
P\big(\mid \hat{\a}-\a\mid> \e_1\big)\leq p.
\end{equation}
  \item Given sample size $n$ and $\ua$, find $\br,\ur,\e$ and possibly
small $\e_1$ so that (\ref{pr}) holds. In other words, find exact
i.e. non-asymptotic confidence interval to $\a$.
\end{itemize}
 The solutions of these questions are formulated as Theorem
\ref{thm21} and Theorem \ref{thm22}.\\\\
The bound in Theorem \ref{thm2} is constructed using the basic bound
$\br(\epsilon,p,n)$ provided by Theorem \ref{thm}. Thus the  $\e_1$
and required sample size $n$ in the inequality (\ref{pr}) depend
heavily on the function $\br$. Unfortunately, the function $\br$ constructed in Section \ref{sec:main} is not explicitly given and, hence, difficult to work with. Although the main goal of the
present paper is just to show that the function $\br$ exists,  in Section \ref{sec:alternative}, we discuss another possibility to
construct $\br$. The new construction gives analytically more tractable bound, the price for it is bigger minimal required
sample size $n_o$ and lower bound.  Also the upper bound $\br$ constructed in Section  \ref{sec:alternative} is also strictly smaller than 1 for every $n$. So we have two different constructions
with the same property, and this allows us to conjecture that even the best bound $\br$ is always smaller than 1 and we also conjecture that the universal upper bound $\u$ does not exist.

\paragraph{Preliminaries.}

For $0<\alpha<2$ every stable distribution has tail(s) that are asymptomatically power laws with heavy tails (e.g., \cite[Theorem 1.12]{Nolan2018}, \cite[Property 1.2.15]{Samorodnitsky1994},
\begin{align*}
F(t)&\sim  c_\a\gamma^\alpha(1-\beta) |t|^{-\alpha},\quad \text{when }t\to -\infty, \\
1-F(t)&\sim c_\a\gamma^\alpha (1+\beta)  t^{-\alpha},\quad \text{when }t\to \infty,\quad
\end{align*}
where $c_\a=\frac{\Gamma(\alpha)}{\pi}\sin\frac{\pi\alpha}{2}\leq\frac{1}{2}$, $ \lim_{\alpha\to 0}c_\a=\frac{1}{2}$, and $ \lim_{\alpha\to 2}c_a=0.$
For {symmetric stable laws ($\beta=0$)} these results imply the existence of
constants\footnote{{For general stable laws it implies for the exitsence of constants $L_1(\a,\beta)=L(a)(1-\beta)$ and $L_2(\a,\beta)=L(a)(1+\beta)$ with$(1-\beta)\in [-2,0]$ and $(1+\beta)\in [0,2]$.}} $L(\a)$ so that
\begin{equation}\label{LL} F(t)\leq
L(\alpha)\left({|t|\over \gamma}\right)^{-\alpha},\quad \forall t<0,\quad
(1-F(t))\leq L(\alpha)\left({t\over \gamma}\right)^{-\alpha},\quad \forall
t>0,\end{equation}
 Figure \ref{fig3} plots the function
$t\mapsto F(t){|t|}^{\alpha}$ for different $\alpha$-values and $\gamma=1$ in the range $[-5,0]$ and $[-10^7,0]$.  Increasing the interval shows similar pattern, hence it is clear that there exists $L<\infty$ so that $\sup_{\a\in (0,2]}L(\a)\leq L$. Throughout the paper, we keep $L$ undetermined, although one can
take it as ${1\over 2}$. It is also obvious that $L$ is independent of $\gamma$.
\begin{figure}[htp]
\centering
\includegraphics[height=0.25\textheight, width=1\textwidth]{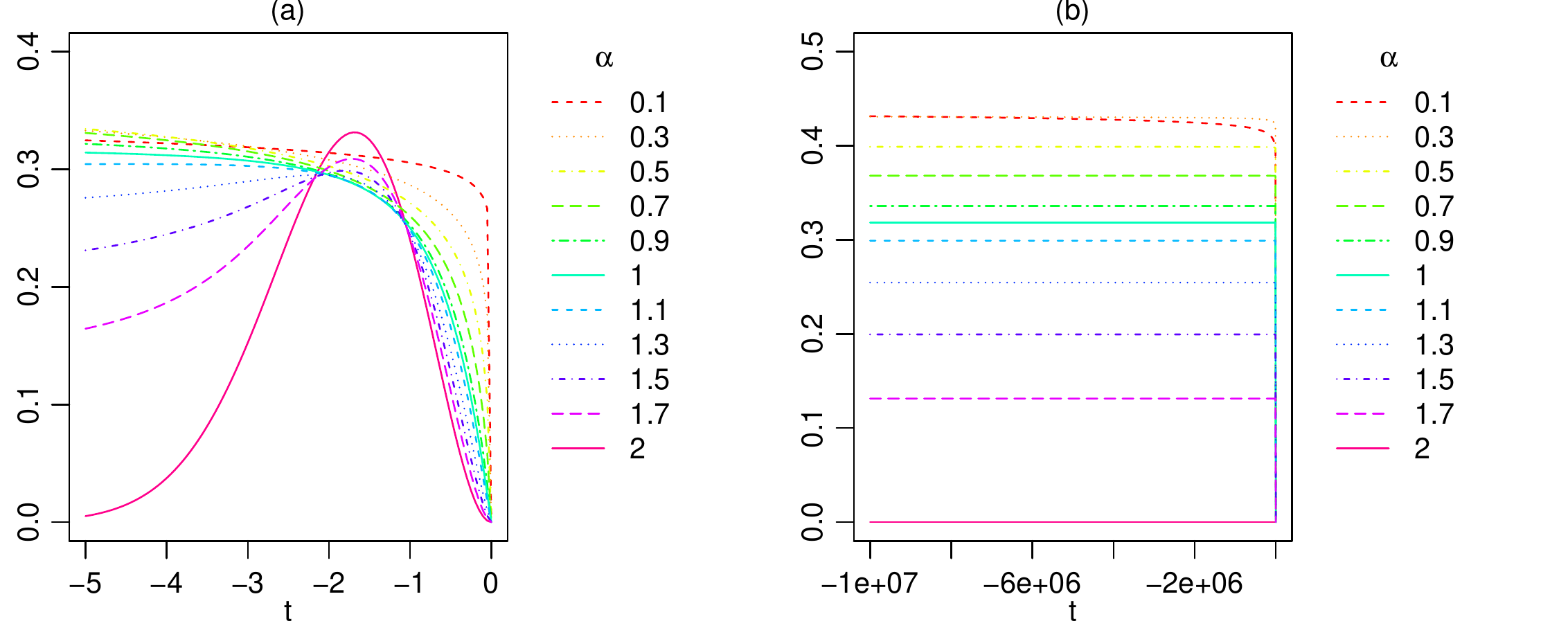}\caption{{Plotting
$t\mapsto F(t){|t|}^{\alpha}$ (calculated with} \cite{STABLER})for different $\alpha$-values with $\gamma=1, delta=0, \beta=0$ in the range $[-5,0]$ and $[-10^7,0]$ .}\label{fig3}
\end{figure}

In what follows, we shall use the following elementary inequalities: for any $x,y>0$,
$$|\ln x-\ln y|\leq {|x-y|\over x\wedge y}={|x-y|\over x}\vee
{|x-y|\over x-|x-y|},$$ we obtain that $|\ln x-\ln y|>\e$ implies
${|x-y|\over x}>{\e\over 1+\e}$. Thus, for any $\u>0$ and $\e>0$
\begin{equation}\label{vorr1}
P\big( \sup_{u\in (0,\u]}|r(u)-\hat{r}(u)|>\e\big)\leq P\Big(
\sup_{u\in (0,\u]}e^{r(\u)}|\varphi(u)-\v_n(u)|>{\e\over 1+\e}\Big).
\end{equation}
Observe that (\ref{vorr1}) holds for any estimate $\v_n$.
\section{Main results}\label{sec:main}
\subsection{Proof of Theorem \ref{thm}}
\label{subsec1:main}
\paragraph{Bounding the difference of characteristic functions.} Recall
$\v_n$ is  the standard empirical estimate {given by} (\ref{emp}). To
bound $|\varphi(u)-\v_n(u)|$ we use the approach in
\cite{Csorgo1981} as follows. For every $0<K<\infty$,
\begin{align*}
|\v_n(u)-\v(u)|&=|\int e^{iux}d(F_n(x)-F(x))|\leq |\int_{-\infty}^{-K} e^{iux}d(F_n(x)-F(x))|\\
& +|\int_{-K}^{K} e^{iux}d(F_n(x)-F(x)) |+|\int_{K}^{\infty}
e^{iux}d(F_n(x)-F(x))|.
\end{align*}
Since $|e^{iux}|=1$ for every $u$ and $x$, the first term can be bounded
\begin{align*}
|\int_{-\infty}^{-K} e^{iux}d(F_n(x)-F(x))| &\leq
\int_{-\infty}^{-K} |e^{iux}|dF_n(x)+\int_{-\infty}^{-K}
|e^{iux}|dF(x)\leq F_n(-K)+F(-K)\\
&\leq \|F_n-F\|+2F(-K),
\end{align*}
where $\|F_n-F\|:=\sup_x|F_n(x)-F(x)|$. The last inequality holds, because  $F_n(-K)\leq  \|F_n-F\|+F(-K)$.  Similarly, the third term can be estimated above by $\|F_n-F\|+2(1-F(K))$. To estimate the second term, we use the integration by parts
\begin{align*}
|\int_{-K}^{K} e^{iux}d(F_n(x)-F(x)) |&=\Big|e^{iux}(F_n(x)-F(x))|_{-K}^K-iu \int_{-K}^K e^{iux}
(F_n(x)-F(x))dx\Big|\\
&\leq |F_n(-K)-F(-K)|+|F_n(K)-F(K)|+|u|\int_{-K}^K|F_n(x)-F(x)|dx\\
&\leq 2\|F_n-F\|+|u|\cdot \|F_n-F\|\cdot 2K.
\end{align*}
Therefore, for any $K$
\begin{equation}\label{3osa}
|\v_n(u)-\v(u)|\leq 4\|F_n-F\|+|u|\cdot \|F_n-F\|\cdot 2K+2F(-K)+2(1-F(K)).
\end{equation}
For any $\delta>0$, let $K(\delta)$ be so big that $F(-K)\leq {\delta\over 8}$. Then also $1-F(K)\leq {\delta\over 8}$ and (\ref{3osa}) implies
\begin{equation}\label{3osa2}
|\v_n(u)-\v(u)|\leq {\delta\over 2}+4\|F_n-F\|+|u|\cdot \|F_n-F\|\cdot 2K(\delta).
\end{equation}
By (\ref{LL}), we can take
\begin{equation}\label{Ka}
K(\delta)=\Big({ 8L\over \delta }\Big)^{1\over \alpha}{\gamma}
\end{equation}
and so with $u>0$, by (\ref{3osa2}) and (\ref{Ka})
\begin{equation}
|\v_n(u)-\v(u)|\leq {\delta\over 2}+2\|F_n-F\|\Big(2+\big({8Lr(u)\over \delta}\big)^{1\over \a}\Big).
\end{equation}
\paragraph{Bounding $r$.} Recall $\br=r(\u)$. Fix $\u>0$ and define $\be=\e e^{\br}$. We now use Dworetzky-Kiefer-Wolfowitz inequality
\cite[p.\ 268]{van1998asymptotic}):
$$P(\|F_n-F\|>\e)\leq 2\exp[-2n\e^2]$$
to estimate
\begin{align*}
P\Big(e^{r(\u)}\sup_{u\leq
\u}|\v_n(u)-\v(u)|>\e\Big)&\leq P\Big(\|F_n-F\|\geq {(\be-\delta/2)\delta^{1\over \a}\over
2\big(2\delta^{1\over \a}+(8L\br)^{1\over \a}\big)}\Big)\\
&= P\Big(\|F_n-F\|\geq {2^{1\over \a}(\be-\delta/2)({\delta\over 2})^{1\over \a}\over
2\big(2^{1+{1\over \a}}({\delta\over 2})^{1\over \a}+(8L\br)^{1\over \a}\big)}\Big)\\
&= P\Big(\|F_n-F\|\geq {(\be-\delta/2)({\delta\over 2})^{1\over \a}\over
2\big(2({\delta\over 2})^{1\over \a}+(4L\br)^{1\over \a}\big)}\Big)\\
&\leq 2\exp\Big[-{n\over 8}\cdot \Big( {(\be-\delta/2)({\delta\over 2})^{1\over \a}\over
({\delta\over 2})^{1\over \a}+{1\over 2}(4L\br)^{1\over \a}}\Big)^2\Big].
\end{align*}
 Define
$$k(\a,\e,\br)={1\over 8}\Big[\max_{0\leq x\leq \be}{(\be-x)\over 1+{1\over 2}\big({4L\br\over x}\big)^{1\over \alpha}}\Big]^2.$$
From (\ref{vorr1}), we obtain
$$
P\big(\sup_{u\in (0,\u]}|r(u)-\hat{r}(u)|>\e\big)\leq 2 \exp[-n
k\big(\a,{\e\over 1+\e},{\bar r}\big)   ]=p$$ which is equivalent to $
k\big(\a,{\e\over 1+\e},{\bar r}\big)={\ln(2/p)\over
n}$, and so the desired upper bound for any $\a$,
denoted by $r_n(\a)$ is the solution of the following equality
\begin{equation}\label{r3def}
k\big(\a,{\e\over 1+\e},r_{n}\big)={\ln(2/p)\over n}.
\end{equation}
Observe that
$$\lim_{r\to 0}k\big(\a,{\e\over 1+\e},{r}\big)={1\over 8}\big({\e\over 1+\e}\big)^2.$$
Hence the following condition gives a lower bound for minimal sample size $n$ so that $r_{n}(\a)>0$:
\begin{equation}\label{tingimus3}
n>8 \ln(2/p)\big({1+\e\over\e}\big)^2.
\end{equation}
\paragraph{The existence of $\br=\inf_{\a\in (0,2]} r_n(\a)$.}
The following lemma shows that $\inf_{\a\in (0,2]}r_{n}(\a)>0$, hence the universal (not depending on $\a$ and $\gamma$) bound $\br$
exists. Since $\e/(1+\e)<1$, without loss of generality, in the lemma we consider $\e\in (0,1)$. The lemma finishes the proof of Theorem \ref{thm}.
\begin{lemma}\label{lemmaA} Fix $1>\e>0$ and $n$ such that  $n>8{\ln(2/p)\over \e^2}$. Let $r_n(\a)$  be  the solution of the equality $k(\a,\e,r)={\ln(2/p)\over n}$.
Then $r_n(\a)$ is continuous strictly positive function on $(0,2]$ and $\lim_{\a\to 0}r_n(\a)>0$. In particular,  $\br=\inf_{\a\in (0,2]}r_n(\a)>0$.\end{lemma}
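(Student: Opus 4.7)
The plan exploits three structural properties of
\[
k(\alpha,\epsilon,r) = \tfrac{1}{8}M(\alpha,r)^2, \qquad M(\alpha,r) := \sup_{0 < x \leq \bar{\epsilon}(r)} \frac{\bar{\epsilon}(r) - x}{1 + \tfrac{1}{2}(4Lr/x)^{1/\alpha}},
\]
with $\bar{\epsilon}(r) = \epsilon e^{-r}$ (as forced by the preceding derivation of the bound on $\|F_n - F\|$): joint continuity in $(\alpha,r)$, strict monotonicity in $r$, and an identifiable pointwise limit as $\alpha \to 0$. Together these yield a continuous extension of $r_n$ to the compact set $[0,2]$, whence positivity of the infimum follows by compactness.

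\textbf{Monotonicity, existence, continuity.} For fixed $\alpha \in (0,2]$, as $r$ grows the numerator $\bar{\epsilon}(r) - x$ decreases for each fixed $x$, the denominator strictly increases, and the feasible range of $x$ shrinks; hence $r \mapsto M(\alpha,r)$ is strictly decreasing. Moreover $k(\alpha,\epsilon,0) = \epsilon^2/8$, which by the hypothesis $n > 8\ln(2/p)/\epsilon^2$ strictly exceeds $c := \ln(2/p)/n$, while $M \leq \bar{\epsilon}(r) \to 0$ as $r \to \infty$. For joint continuity on $(0,2] \times [0,\infty)$: the integrand is continuous in $(\alpha,r)$ for each $x$ (giving lower semi-continuity of the sup); and since it vanishes at both endpoints $x \to 0^+$ (denominator blows up) and $x = \bar{\epsilon}(r)$ (numerator vanishes), the supremum is attained in the interior, so a compactness-of-maximizers argument along any convergent sequence $(\alpha_k,r_k) \to (\alpha_0,r_0)$ delivers upper semi-continuity. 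The intermediate value theorem then gives a unique positive solution $r_n(\alpha)$ of $k = c$, continuous in $\alpha \in (0,2]$.

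\textbf{Limit at $\alpha = 0$ and conclusion.} For each fixed $r > 0$: if $x > 4Lr$ then $(4Lr/x)^{1/\alpha} \to 0$ and the fraction tends to $\bar{\epsilon}(r) - x$; if $x < 4Lr$ the fraction tends to $0$. Taking the supremum yields
\[
k_0(r) := \lim_{\alpha \to 0} k(\alpha,\epsilon,r) = \tfrac{1}{8}\bigl(\epsilon e^{-r} - 4Lr\bigr)_+^2.
\]
The function $k_0$ is continuous and strictly decreasing on $[0,r_1]$, where $r_1$ is the unique positive zero of $\epsilon e^{-r} - 4Lr$, from $\epsilon^2/8 > c$ down to $0$; hence $k_0(r) = c$ has a unique solution $r_\infty \in (0,r_1)$. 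A partition of the $x$-range (into $x < 4Lr - \eta$, $|x - 4Lr| \leq \eta$, $x > 4Lr + \eta$) upgrades the pointwise convergence to joint continuity of the extended $k$ at $\alpha = 0$, where we set $k(0,\epsilon,r) := k_0(r)$. Then $r_n$, extended by $r_n(0) := r_\infty$, is continuous and strictly positive on the compact interval $[0,2]$, so $\bar{r} = \inf_{\alpha \in (0,2]} r_n(\alpha) = \min_{[0,2]} r_n(\alpha) > 0$.

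The main technical obstacle is controlling the maximizer $x^*(\alpha,r)$ of the inner supremum: for continuity on $(0,2] \times (0,\infty)$ one needs $x^*$ to stay in a compact subset of $(0,\bar{\epsilon}(r))$ along convergent sequences, and for the extension at $\alpha = 0$ one needs $x^*(\alpha,r) \to 4Lr^+$ as $\alpha \to 0$ whenever $4Lr < \bar{\epsilon}(r)$. These uniformities can be verified either from the first-order condition $2\alpha x D = (\bar{\epsilon}-x)(4Lr/x)^{1/\alpha}$, or more efficiently via Berge's maximum theorem combined with the $\eta$-partition estimates used for the pointwise limit.
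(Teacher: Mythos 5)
Your proposal is correct and follows essentially the same strategy as the paper: identify the pointwise limit $k(0,\e,r)=\tfrac{1}{8}\big(\e e^{-r}-4Lr\big)_+^2$ as $\a\to 0$, extend $r_n$ continuously to the compact interval $[0,2]$ with $r_n(0)>0$ (using $n>8\ln(2/p)/\e^2$), and conclude that the infimum over $(0,2]$ is positive. The only divergence is technical: the paper controls the inner maximizer explicitly through the first-order condition (writing $k=\tfrac{1}{8}(\e e^{-r}-(1+\a)x')^2$ with $x'$ solving $f(x)=\e e^{-r}$) and upgrades pointwise to uniform convergence of the strictly decreasing functions $r\mapsto k(\a,\e,r)$ to transfer convergence to the solutions, whereas you reach the same continuity statements via semicontinuity/Berge-type arguments and the $\eta$-partition of the $x$-range --- both routes close the gap you flag about the maximizer.
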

\begin{proof}
For every $\a>0$ and $\e>0$, define  function
$$h(r,x)={(\e e^{-r}-x)\over 1+ {1\over 2} ({4Lr\over x})^{1\over \a}},\quad x\in (0,\e e^{-r}],\quad  r>0.$$
Let us fix $r>0$ and denote $c=4Lr$.  Let
$$x'(\e,r)=\arg\max_{0\leq x\leq \e e^{-r}}{(\e e^{-r}-x)\over 1+{1\over 2}\big({c\over x}\big)^{1\over \alpha}}=\arg\max_{0\leq x\leq \e e^{-r}} h(x,r).$$
Since $x\mapsto h(x,r)$ is continuous and strictly decreasing function, the maximizer $x'$ exists and is unique. It is not difficult to see that $x'$ must satisfy the following equalities:
\begin{equation*}\label{x}
{(\e e^{-r}-x')\over 1+ {1\over 2} ({c\over x'})^{1\over \a}}=\e e^{-r}-(1+\a)x'\quad \Leftrightarrow\quad {2\a\over c^{1\over \a}} ({x'})^{1+{1\over \a}}={\e e^{-r}}-({1+\a})x'.
\end{equation*}
The inequality in the left implies that
\begin{equation}\label{k3}
k(\a,\e,r)={1\over 8}\Big[\sup_{x\in (0,\e e^{-r}]}h(x,r)\Big]^2={1\over 8}\big(\e e^{-r}-(1+\a)x'\big)^2.\end{equation}
The equality in the right implies $x'<{\e e^{-r}\over 1+\a}$ and is equivalent to $x'\big({2\a\over c^{1\over \a}}({x'})^{1\over \a}+(1+\a)\big)=\e e^{-r}$. Hence we obtain
\begin{equation}\label{xbound}
\e e^{-r} \Big( {2\a\over c^{1\over \a}}\left({\e e^{-r} \over 1+\a}\right)^{1\over \a}+(1+\a)\Big)^{-1}< x'<{\e e^{-r}\over 1+\a}.\end{equation}
Define  function
\begin{align}\label{fx}
f(x)=x\Big({2\a\Big({x\over c}\Big)^{1\over \a}}+(1+\a)\Big),\quad x\in (0,\e e^{-r}].
\end{align}
Thus $x'$ is the solution of the equality $f(x)=\e e^{-r}$.\\\\
Suppose now $\a_m\to \a_o>0$. Let $f_m$ and $f_o$ be as $f$ with $\a_m$ and $\a_o$ instead of $\a$, respectively
and let $x_m$ and $x_o$ be the solutions of the equalities $f_m(x)=\e e^{-r}$ and $f_o(x)=\e e^{-r}$. Clearly, for any $x\in (0,\e e^{-r}]$, it holds $f_m(x)\to f_o(x)$. However, since $\a_o>0$, by (\ref{xbound}) we see that there exists $y>0$ so that $x_m\in (y,\e e^{-r}]$ eventually since obviously
  $\sup_{x\in [y,\e e^{-r}]}|f_m(x)-f(x)|\to 0$, we obtain that $x_m\to x_o$ and by (\ref{k3}), thus $k(\a_m,\e,r)\to k(\a_0,\e,r)$. Now observe that for any $\a>0$ and $\e>0$,
 $\lim_{r\to 0}k(\a,\e,r)={\e^2\over 8}$ and $\lim_{r\to \infty}k(\a,\e,r)=0$. Moreover $r\mapsto k(\a,\e,r)$ is strictly decreasing  function. For such functions, pointwise convergence implies uniform convergence, so that as $m$ grows
\begin{equation}\label{yk}\sup_{r\geq 0}|k(\a_m,\e,r)-k(\a_0,\e,r)|\to 0.\end{equation}
The uniform convergence implies that the solutions of the equalities $k(\a_m,\e,r)={\ln(2/p)/n}$ converge as well, i.e. $r_n(\a_m)\to r_n(\a_o)$, provided $n>8{\ln(2/p)\over \e^2}$ so that the  solutions exists.\\\\
We have  proven that the function $r_n(\alpha)$ is continuous on the set $(0,2]$. Let us now consider the case $\a_m\to 0$. From the equalities $f_m(x_m)=\e e^{-r}$, it follows that
$$\lim_m x_m=\left\{
               \begin{array}{ll}
                 c, & \hbox{when $c\leq \e e^{-r}$;} \\
                 \e e^{-r}, & \hbox{when $c>\e e^{-r}$,}
               \end{array}
             \right.
$$
{where $c=4Lr$.} Therefore, for every fixed $r>0$,
$$k(\a_m,\e,r)={1\over 8}\big(\e e^{-r}-(1+\a_m)x_m)^2\to k(0,\e,r)$$
                                                with  $$k(0,\e,r)=\left\{
                                                  \begin{array}{ll}
                                                    {1\over 8}(c-\e e^{-r})^2, & \hbox{if $c\leq \e e^{-r}\quad \Leftrightarrow \quad r\leq W({\e\over 4L})$;} \\
                                                    0, & \hbox{else,}
                                                  \end{array}
                                                \right.$$
where $W$ stands for Lambert $W$-function. Observe that  $r\mapsto k(0,\e,r)$ is strictly decreasing function with limits $\lim_{r\to 0}k(0,\e,r)={\e^2\over 8}$ and $\lim_{r\to \infty}k(0,\e,r)=0$. Hence (\ref{yk}) holds with $\a_o=0$. This, in turn, implies that
$r_n(\a_m)\to r_n(0)$, where $r_n(0)$ is the solution of the equality  $k(0,\e,r_n(0))={\ln(2/p)\over n}$. Since $\lim_{r\to 0}k(0,\e,r)={\e^2\over 8}$, and by assumption ${\e^2\over 8}>{\ln (2/p)\over n}$, we see that
$r_n(0)>0$.  Hence $r_n(\a)$ is continuous strictly positive function on $[0,2]$, thus $\inf_{\a\in (0,2]} r_n(\a)>0$.
\end{proof}
\subsection{Bounding $\ln r$}
We are now interested in finding the probabilistic bounds on difference $|\ln r(u)-\ln \hat{r}(u)|$, where, as previously, $X_1,...,X_n$ is iid sample from symmetric  stable law, ${r}(u)=-\ln |\varphi(u)|=(\gamma|u|)^\alpha$ and $\hat{r}(u)=-\ln |\varphi_n(u)|$. For that an additional assumption has to be made. In the present subsection, we assume that there exists a known lower bound $0<\ua$ such that the parameter space is $[\ua,2]$ instead of $(0,2]$. The crucial benefit of knowing the lower bound of $\ua$ is the fact that the bound $\br$ from Theorem \ref{thm} increases to infinity as $n$ grows.
Recall that the bound $\br$ in lacks this property: although $\br$ increases with $n$, it always satisfies $\br<W({\e\over 2L(1+\e)})<1$ and, as argued in Introduction, such a property might be unavoidable. Bounding the parameter space away from zero, we have a new bound that tends to infinity as $n$ increases.
Let us formulate it as a corollary.
\begin{corollary}\label{kolla}
Assume $\ua>0$ to be given.  Fix $\e>0$, $p\in (0,1)$ and let $n_o(\e,p)<\infty$ be as in Theorem \ref{thm}. Then
  for every $n>n_o$ there exists $\bar{r}(\e,p,n,\ua)$  independent of $\a$ and $\gamma$ such that $\lim_n \bar{r}(\e,p,n,\ua)=\infty$ and
$$P\Big( \sup_{u>0: r(u)\leq \overline{r}}|r(u)-\hat{r}(u)|>\e\Big)\leq p,$$
for every $\alpha\in [\ua,2]$ and $\gamma>0$.\end{corollary}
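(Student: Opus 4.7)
The plan is to reuse the construction from the proof of Theorem~\ref{thm}: define
$$\bar{r}(\e,p,n,\ua)\;:=\;\inf_{\a\in[\ua,2]} r_{n}(\a),$$
where $r_{n}(\a)$ is the function defined by the equality $k\bigl(\a,\e/(1+\e),r_{n}(\a)\bigr)=\ln(2/p)/n$ from equation (\ref{r3def}). Once $n>n_o(\e,p)$, Lemma~\ref{lemmaA} tells us that $\a\mapsto r_{n}(\a)$ is continuous and strictly positive on $(0,2]$; restricted to the compact interval $[\ua,2]\subset(0,2]$, the infimum is attained and is strictly positive, so $\bar{r}(\e,p,n,\ua)>0$. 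The probability bound itself then follows verbatim from the derivation leading to Theorem~\ref{thm}, because the argument there only uses that $\a$ ranges over a subset of $(0,2]$. Nothing new is needed for the inequality.

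The only genuinely new content is the claim $\lim_{n}\bar{r}(\e,p,n,\ua)=\infty$, which I would prove by a compactness-plus-continuity argument. Fix an arbitrary $R>0$ and consider the function $\a\mapsto k\bigl(\a,\e/(1+\e),R\bigr)$. By the analysis in the proof of Lemma~\ref{lemmaA} this function is continuous on $(0,2]$, and from the identity $k(\a,\e,R)=\tfrac{1}{8}\bigl(\e e^{-R}-(1+\a)x'\bigr)^2$ combined with the strict inequality $x'<\e e^{-R}/(1+\a)$ in \eqref{xbound}, one gets $k(\a,\e/(1+\e),R)>0$ for every $\a>0$. Hence, on the compact set $[\ua,2]$,
$$m(R)\;:=\;\min_{\a\in[\ua,2]} k\bigl(\a,\tfrac{\e}{1+\e},R\bigr)\;>\;0.$$
As soon as $n$ is large enough that $\ln(2/p)/n<m(R)$, strict monotonicity of $r\mapsto k(\a,\e,r)$ forces $r_{n}(\a)>R$ for every $\a\in[\ua,2]$, i.e.\ $\bar{r}(\e,p,n,\ua)>R$. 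Since $R$ was arbitrary, $\bar{r}\to\infty$ as $n\to\infty$.

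The main subtlety — not really an obstacle — is justifying that $m(R)$ stays bounded away from zero uniformly in $\a\in[\ua,2]$. This is precisely where the lower bound $\ua>0$ enters: in Theorem~\ref{thm} one is forced to take $\inf$ over all of $(0,2]$, and the limiting function $k(0,\e,r)$ vanishes identically for $r>W(\e/(4L))$, which caps $\bar{r}$ below $1$. Removing a neighbourhood of $\a=0$ eliminates this degenerate behaviour, and the continuity/strict-positivity package from Lemma~\ref{lemmaA} does the rest.
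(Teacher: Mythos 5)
Your proof is correct, and for the one genuinely new claim it takes a different route from the paper. The skeleton is the same: both define $\bar{r}(\e,p,n,\ua)=\inf_{\a\in[\ua,2]}r_n(\a)$ and inherit the probability inequality from the derivation of Theorem~\ref{thm}, since shrinking the parameter range can only enlarge the admissible bound. The divergence is in proving $\lim_n\bar{r}=\infty$. The paper uses a monotonicity argument: choosing $r_o$ with $4Lr_o>\e e^{-r_o}$, it observes that for $r\geq r_o$ and $x\in(0,\e e^{-r}]$ one has $4Lr/x>1$, so $(4Lr/x)^{1/\a}$ decreases in $\a$ and hence $\a\mapsto k(\a,\e,r)$ is increasing; consequently $r_n(\a)\geq r_n(\ua)$ for all $\a\in[\ua,2]$ once $r_n(\ua)>r_o$, and it only remains to check $r_n(\ua)\to\infty$ for the single parameter value $\ua$. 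You instead fix an arbitrary level $R$, invoke the continuity of $\a\mapsto k(\a,\e/(1+\e),R)$ established in Lemma~\ref{lemmaA} together with the strict positivity coming from $x'<\e e^{-R}/(1+\a)$, and extract a uniform minimum $m(R)>0$ over the compact set $[\ua,2]$; strict monotonicity of $k$ in $r$ then gives $r_n(\a)>R$ for all $\a\in[\ua,2]$ as soon as $\ln(2/p)/n<m(R)$. Both arguments are sound. The paper's version buys the additional structural fact that the infimum is eventually attained at $\a=\ua$ (which is what makes $\bar{r}$ computable in Section~5), whereas yours is more robust and would survive any modification of $k$ that preserves continuity, pointwise positivity, and monotone decrease in $r$.
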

\begin{proof}
Fix $0<\ua<2$ and take $r_o$ so big that $4Lr_o>\e e^{-r_o}$. Then for every $r\geq r_o$ and for every $x\in[0,\e e^{-r}]$, the function
$$\a \mapsto {(\e e^{-r}-x)\over 1+ {1\over 2} ({c4Lr\over x})^{1\over \a}}$$
is increasing in $\a$. Therefore
\begin{equation}\label{vrr}
k(\a,\e,r)\geq k(\ua,\e,r),\quad \forall \a\geq \ua\quad r\geq r_o.
\end{equation}
Now observe that when $\a>0$, then for every $r$, it holds $x'< {\e e^{-r}\over 1+\a}$, thus $k(\a,\e,r)={1\over 8}\big(\e e^{-r}-(1+\a)x')^2>0$. Since $\lim_{r\to \infty}k(\a,\e,r)=0$, it follows that $\lim_n r_n(\a)=\infty$.
Therefore, there exists $n(r_o,\ua)$ so big that $r_n(\ua)>r_o$.  From (\ref{vrr}), it follows that when $n>n_o$, it holds  $r_n(\a)\geq r_n(\ua)$, $\forall \a\geq \ua$. Hence $\lim_n \inf_{\a\in [\ua,2]}r_n(\a)=\lim_n r_n(\ua)=\infty.$ The inequality in the statement now follows from Theorem \ref{thm}.
\end{proof}
\begin{theorem}\label{thm2} Assume $\ua>0$ to be given.  Fix $\e>0$, $p\in (0,1)$ and $\ur>0$.
Take $n_1(\e,p,\ua,\ur)$ as minimal $n$ such that
\begin{equation}\label{ass12}\bar{r}({\e\over 1+\e}\ur,p,n,\ua)>\ur,\quad n_1(\e,p,\ua,\ur)\geq n_o({\e\over 1+\e}\ur,p),\end{equation}
where $n_o(\e,p)$ is as in Theorem \ref{thm} and the function $\bar{r}$ is as in Corollary \ref{kolla}. Then for every $n\geq n_1$, $\alpha\in [\ua,2]$ and $\gamma>0$
\begin{equation}\label{ln-vorr}
P\Big( \sup_{u: r(u)\in [\ur, \overline{r}]}|\ln r(u)-\ln \hat{r}(u)|>\e\Big)\leq p,\end{equation}
where $\bar{r}=\bar{r}({\e\over 1+\e}\ur,p,n,\ua)>\ur$.\end{theorem}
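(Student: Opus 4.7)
The strategy is to deduce the theorem directly from Corollary \ref{kolla} via the elementary logarithmic bound recorded in the preliminaries: for positive $x, y$,
$$|\ln x - \ln y| > \e \;\Longrightarrow\; \frac{|x-y|}{x} > \frac{\e}{1+\e}.$$
Applied pointwise with $x = r(u)$ and $y = \hat{r}(u)$, on the set $\{u : r(u) \in [\ur, \br]\}$ this gives
$$|r(u) - \hat{r}(u)| > \frac{\e}{1+\e}\, r(u) \;\geq\; \frac{\e\,\ur}{1+\e} =: \e',$$
using $r(u) \geq \ur$. The degenerate case $\hat{r}(u) = 0$ is immediate, since then $|r(u) - \hat{r}(u)| = r(u) \geq \ur > \e'$ because $\e/(1+\e) < 1$. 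Hence the event whose probability is to be bounded is contained in
$$\Big\{\sup_{u:\, r(u) \leq \br}|r(u) - \hat{r}(u)| > \e'\Big\}.$$

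To finish, I would invoke Corollary \ref{kolla} with preciseness $\e'$, confidence $p$, and lower parameter bound $\ua$. The two hypotheses collected in (\ref{ass12}) are precisely what this step requires: $n \geq n_o(\e', p)$ makes the corollary applicable at preciseness $\e'$, and $\br(\e', p, n, \ua) > \ur$ ensures that the choice $\br := \br(\e', p, n, \ua)$ makes the interval $[\ur, \br]$ non-empty (so the supremum in (\ref{ln-vorr}) is indeed over a non-trivial set). The corollary then supplies
$$P\Big(\sup_{u:\, r(u) \leq \br} |r(u) - \hat{r}(u)| > \e'\Big) \leq p,$$
which, combined with the inclusion above, yields (\ref{ln-vorr}).

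There is essentially only one delicate point, and it is a matter of bookkeeping rather than genuine difficulty: the effective preciseness shrinks by the factor $\ur/(1+\e)$, forcing a correspondingly larger minimal sample size through (\ref{ass12}). This shrinking is structurally necessary because the derivative of $\ln$ blows up near zero, which is also precisely why the extra hypothesis $\alpha \geq \ua$ cannot be dropped here: without it, Corollary \ref{kolla} does not produce a $\br$ that eventually dominates the fixed $\ur$, and the event in (\ref{ln-vorr}) would be vacuous or uncontrollable.
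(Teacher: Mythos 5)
Your proposal is correct and follows essentially the same route as the paper: reduce $|\ln r(u)-\ln\hat{r}(u)|>\e$ on $\{u: r(u)\geq\ur\}$ to $|r(u)-\hat{r}(u)|>\frac{\e}{1+\e}\ur$ via the elementary logarithmic inequality from the preliminaries, and then invoke Corollary \ref{kolla} at preciseness $\frac{\e}{1+\e}\ur$, with the two conditions in (\ref{ass12}) guaranteeing applicability and $\br>\ur$. Your explicit bookkeeping of why each hypothesis in (\ref{ass12}) is needed is a harmless elaboration of what the paper leaves implicit.
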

\begin{proof}  By Corollary \ref{kolla}, such a finite $n_1$ exists. Now, for any $n>n_1$, by Corollary \ref{kolla} again,
$$
P\Big( \sup_{u: r(u) \in[\underline{r},\bar{r}]}|\ln r(u)-\ln
\hat{r}(u)|>\e\Big)\leq P\Big( \sup_{u: r(u)
\in[\underline{r},\bar{r}]}|r(u)-\hat{r}(u)|>{\e\over
1+\e}\underline{r}\Big)\leq p.$$
\end{proof}
\section{Applications of Theorem \ref{thm2}}\label{sec:hinnang}
Recall the estimate $\hat{\alpha}$ in (\ref{hinn}). The construction of $\hat{\alpha}$ requires fixing the constants  $\ur,\br$ and $\epsilon$. The following simple lemma shows how Theorem \ref{thm2} can be used to prove a large deviation inequality for $\hat{\a}$.
\begin{lemma}\label{lemma:prec} Suppose $\ur<\br$ and $\epsilon$ are chosen such that  $\br-2\e>\ur+2\e$ and (\ref{ln-vorr}) holds for some $p\in (0,1)$. Take $\e_1>0$ so big that
\begin{equation}\label{suur0}{\e_1\over 4}\ln \left({\br-2\e\over \ur+2\e}\right)\geq \e.\end{equation}
Then
\begin{equation}\label{tapsus}
P\Big(|\hat{\a}-\a|\geq \e_1\Big)\leq p.
\end{equation}
\end{lemma}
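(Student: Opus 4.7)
The plan is to condition on the high-probability event $A$ supplied by Theorem \ref{thm2} and reduce the claim to a short log-ratio calculation. On $A$ we have $|\ln r(u)-\ln\hat r(u)|\le\e$ whenever $r(u)\in[\ur,\br]$, and $P(A)\ge 1-p$. Since $r(u)=(\gamma u)^\alpha$, the identity
\[
\alpha=\frac{\ln r(u_1)-\ln r(u_2)}{\ln u_1-\ln u_2}
\]
holds for any $u_1\ne u_2$, so subtracting from the definition \eqref{hinn} of $\hat\alpha$ gives
\[
\hat\alpha-\alpha=\frac{(\ln\hat r(u_1)-\ln r(u_1))-(\ln\hat r(u_2)-\ln r(u_2))}{\ln u_1-\ln u_2}.
\]
On $A$ the numerator is bounded by $2\e$ as soon as $r(u_1),r(u_2)\in[\ur,\br]$, so the task reduces to (i) locating $u_1,u_2$ in the good range, and (ii) bounding the denominator from below.

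For (i), write $u_\ur=r^{-1}(\ur)$ and $u_\br=r^{-1}(\br)$. The elementary inequalities $e^{-\e}\ge 1-\e$ and $e^\e\le 1/(1-\e)$, applied to the log bound at these two points, give on $A$
\[
\hat r(u_\br)\ge \br e^{-\e}>\br-\e,\qquad \hat r(u_\ur)\le \ur e^\e<\ur+\e,
\]
where the first strict inequality uses $\br<1$ (granted by Theorem \ref{thm}) and the second uses $\ur+2\e<1$, a consequence of the gap hypothesis $\br-2\e>\ur+2\e$ together with $\br<1$. Since the log bound keeps $|\varphi_n|$ bounded away from $0$ on $[0,u_\br]$, $\hat r$ is continuous there, and the intermediate value theorem places $u_2<u_1$ in $(u_\ur,u_\br)$, so in particular $r(u_i)\in(\ur,\br)$. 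Feeding this back into the log bound at $u_1,u_2$ and using the same elementary inequalities sharpens the bracketing to
\[
r(u_1)\ge(\br-\e)e^{-\e}\ge\br-2\e,\qquad r(u_2)\le(\ur+\e)e^\e\le\ur+2\e,
\]
again using $\br<1$ and $\ur+2\e<1$ respectively.

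For (ii), using $\alpha\le 2$,
\[
|\ln u_1-\ln u_2|=\frac{\ln r(u_1)-\ln r(u_2)}{\alpha}\ge \tfrac12\ln\frac{\br-2\e}{\ur+2\e},
\]
so combining with the $2\e$ bound on the numerator,
\[
|\hat\alpha-\alpha|\le\frac{4\e}{\ln\tfrac{\br-2\e}{\ur+2\e}}\le\e_1
\]
by hypothesis \eqref{suur0}. Thus $P(|\hat\alpha-\alpha|\ge\e_1)\le P(A^c)\le p$.

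The main technical subtlety is the a priori ambiguity in \eqref{u}: $u_2$ is defined as a supremum over all $u>0$, so oscillations of $|\varphi_n|$ beyond $u_\br$, where Theorem \ref{thm2} provides no control, could in principle push $u_2$ out of $[\ur,\br]$. The natural reading, in the spirit of the intermediate value argument above, is to restrict the supremum to $\{u\le u_1:\hat r(u)=\ur+\e\}$, and with this reading the plan is complete.
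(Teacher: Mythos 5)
Your overall route is the same as the paper's: decompose $\hat{\a}-\a$ into the two log-deviations at $u_1,u_2$ over $\ln u_1-\ln u_2$, bound the numerator by the supremum from \eqref{ln-vorr}, bound the denominator below via $\a\le 2$ and the bracketing $r(u_1)\ge\br-2\e$, $r(u_2)\le\ur+2\e$, and close with \eqref{suur0}. The paper phrases this as a chain of event inclusions ending in $E\cap\{|\hat\a-\a|\ge\e_1\}\subseteq E^c$, while you argue pointwise on the good event; that difference is cosmetic.

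There is, however, a genuine gap in your localization step (i). You invoke $\br<1$ (attributed to Theorem \ref{thm}) to get $\br e^{-\e}>\br-\e$ and $(\br-\e)e^{-\e}\ge\br-2\e$, and you derive $\ur+2\e<1$ from it. But the $\br$ appearing in Lemma \ref{lemma:prec} is the one from \eqref{ln-vorr}, i.e. from Theorem \ref{thm2} and Corollary \ref{kolla}, and that bound satisfies $\br(\e,p,n,\ua)\to\infty$ as $n\to\infty$; indeed the whole point of the application is to make $\br$ large so that $\ln\bigl((\br-2\e)/(\ur+2\e)\bigr)$ is large and $\e_1$ can be small. (The remark that the bound is $<1$ concerns only the basic construction of Theorem \ref{thm} without the lower bound $\ua$.) For, say, $\br=10$ and $\e=0.1$ one has $\br e^{-\e}\approx 9.05<9.9=\br-\e$, so your intermediate-value argument and the sharpened bracketing both fail. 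The multiplicative control $r(u)e^{-\e}\le\hat r(u)\le r(u)e^{\e}$ coming from \eqref{ln-vorr} is simply too weak to localize $u_1$ when $\br$ is large. The fix is to use the additive deviation bound that underlies Theorem \ref{thm2}: its proof reduces \eqref{ln-vorr} to $\sup|r(u)-\hat r(u)|\le\frac{\e}{1+\e}\ur\le\e$ on the range $r(u)\le\br$ (Corollary \ref{kolla}), and with $|\hat r(u_\br)-\br|\le\e$ your IVT argument and the bounds $r(u_1)\ge\hat r(u_1)-\e=\br-2\e$, $r(u_2)\le\hat r(u_2)+\e=\ur+2\e$ go through with no smallness assumption on $\br$; this is essentially what the paper asserts (without the IVT detail) when it writes $|\hat r(u_i)-r(u_i)|\le\e$ on $E$. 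Your observation about the ambiguity of $u_2$ as a supremum over all $u>0$ is a fair point that the paper glosses over.
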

\begin{proof}
Since
 $u\mapsto \hat{r}(u)$ is continuous, we have by (\ref{hinn}) that $\hat{r}(u_1)=\br-\e$ and $\hat{r}(u_2)=\ur+\e$.
Let
\begin{equation}\label{sisu}
E=\big\{\sup_{u: r(u)\in [\ur, \br]}|\ln r(u)-\ln \hat{r}(u)|\leq \e\big\}.\end{equation}
On the event $E$, it holds
$|\hat{r}(u_i)-r(u_i)|\leq \e$ for $i=1,2$ and so
 $\br-2\e\leq r(u_1)\leq  \br$ and $\ur \leq r(u_2)\leq \ur+2\e$.
Therefore, on the set $E$, for any  $\e_1>0$ the following implications hold
\begin{align*}
\{|\hat{\a}-\a|\geq \e_1\}&=\Big\{\Big|{\ln (\br-\e)-\ln  (\ur+\e) \over \ln u_1-\ln u_2}-{\ln r(u_1)-\ln r(u_2) \over \ln u_1-\ln u_2}\Big|> \e_1\Big\}\\
&=\big\{|(\ln \hat{r}(u_1)-\ln r(u_1))+(\ln r(u_2)-\ln \hat{r}(u_2))|> {\e_1  \ln (u_1/u_2)}\big\}\\
&\subseteq \big\{\sup_{u: r(u)\in [\ur, \br]} |\ln r(u)-\ln \hat{r}(u)|> {\e_1\over 2}\ln (u_1/u_2)\big\}\\
&=\big\{\sup_{u: r(u)\in [\ur, \br]} |\ln r(u)-\ln \hat{r}(u)|> {\e_1\over 2\a}\ln (r(u_1)/r(u_2))\big\}\\
&\subseteq \Big\{\sup_{u: r(u)\in [\ur, \br]} |\ln r(u)-\ln \hat{r}(u)|> {\e_1\over 4}\ln \left({\br-2\e\over \ur+2\e}\right)\Big\}.
\end{align*}

We have thus shown that
$$E\cap\{|\hat{\a}-\a|\geq \e_1\} \subseteq \Big\{\sup_{u: r(u)\in [\ur, \br]} |\ln r(u)-\ln \hat{r}(u)|> {\e_1\over 4}\ln \big({\br-2\e\over \ur+2\e}\big)\Big\}.$$

Taking now $\epsilon_1$ so big that  (\ref{suur0}) holds, we obtain that
$$E\cap\{|\hat{\a}-\a|\geq \e_1\} \subseteq  E^c$$
which obviously implies that $\{|\hat{\a}-\a|\geq \e_1\} \subseteq  E^c$ and so (\ref{tapsus}) holds.
\end{proof}
\paragraph{Exact confidence intervals.}
In what follows, we shall briefly discuss how to choose $\e>0,\ur, \br$ such that (\ref{tapsus}) holds. In particular, we shall address the following classical problems of parameter estimation:
\begin{description}
  \item[Q1:] Given lower bound $\ua>0$, precision $\e_1$ and probability $p>0$, find possibly small sample size $n$ and  constants  $\ur,\br,\e$ (for constructing $\hat{\a}$) so that the estimate (\ref{hinn}) satisfies inequality (\ref{tapsus}).
  \item[Q2:] Given lower bound $\ua>0$, sample size $n$ and probability $p>0$, find possibly small $\e_1>0$ and $\ur,\br,\e$ (for constructing $\hat{\a}$) so that the estimate (\ref{hinn}) satisfies inequality (\ref{tapsus}). In other words, find exact (non-asymptotic) confidence intervals: with probability $1-p$: $\hat{\a}-\e_1\leq \a \leq \hat{\a}+\e_1$.
\end{description}
To solve {\bf Q1}, define for any $\rho>0$, $n$ and $\e>0$
\begin{equation}
F(n,\rho,\e):={\bar{r}\big(\rho,p,n,\ua\big)-2\e\over \rho{1+\e\over \e}+2\e}-\exp[{4\over \e_1}\e],\end{equation}
where $\bar{r}\big(\rho,p,n,\ua\big)$ is as in Corollary \ref{kolla}. The function $F$ depends also on $\ua,\e_1$ and $p$, but these parameters are fixed and left out from notation. Define
$$F(n,\rho)=\sup_{\e>0}F(n,\rho,\e),\quad F(n)=\sup_{\rho>0}\Big[F(n,\rho) \wedge (n-n_o(\rho,p))\Big],$$
where $n_o(\rho,p)$ is as in Theorem \ref{thm}. Now  take $n_1$ minimal $n$ such that $F(n_1)>0$. Observe that $F(n)$ is increasing, so that when $F(n_1)>0$, then $F(n)>0$ for any $n>n_1$. The estimation procedure is now the following.
\paragraph{EstimationProcedure1:}
\begin{enumerate}
  \item Find $n_1$ such that $F(n_1)>0$.
  \item Given $n\geq n_1$ find $\rho>0$ such that $F(n,\rho)>0$.
  \item Use $\rho$  to find $\e>0$ such that $F(n,\rho_o,\e)>0$.
  \item Use   $\rho$ to determine $\br=\br(\rho,p,n,\ua)$, where $\br(\rho,p,n,\ua)$ is as in Corollary \ref{kolla}.
  \item Use $\e$ and $\rho$ to find $\ur=\rho{1+\e\over \e}$.
  \item Use $\ur$ and $\br$ to find $u_1$ and $u_2$ as in (\ref{u}).
  \item Estimate $\hat{r}(u)$ based on the sample of size $n$ and the estimate of characteristic function.
  \item Find $\hat{\a}$ as in (\ref{hinn}).
\end{enumerate}
\begin{theorem}\label{thm21} Let $\ua$, $\e_1>0$ and $p\in (0,1)$ be given. Let  $n$ be the sample size satisfying $F(n)>0$. Then the estimate $\hat{\a}$ obtained via {\rm EstimationProcedure1}  satisfies the inequality (\ref{tapsus}), provided the true parameter $\a$ satisfies the inequality $\a\geq \ua$.\end{theorem}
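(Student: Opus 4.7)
The strategy is essentially a chase of definitions: the function $F(n,\rho,\e)$ introduced in EstimationProcedure1 is engineered so that the single condition $F(n,\rho,\e)>0$ encodes every hypothesis required by Theorem \ref{thm2} and Lemma \ref{lemma:prec} simultaneously, under the reparametrization $\rho=\tfrac{\e}{1+\e}\ur$ (equivalently $\ur=\rho\tfrac{1+\e}{\e}$, as in Step~5 of the procedure). Once this is unpacked, the proof reduces to verifying those hypotheses in sequence.

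I would first use the hypothesis $F(n)>0$ to justify Steps 2--3 of the procedure. Since $F(n)=\sup_\rho\bigl[F(n,\rho)\wedge(n-n_o(\rho,p))\bigr]$, positivity forces the existence of a $\rho>0$ with both $F(n,\rho)>0$ and $n>n_o(\rho,p)$; this is the $\rho$ taken in Step~2. Its existence in turn gives an $\e>0$ with $F(n,\rho,\e)>0$, taken in Step~3. Step~4 then sets $\br=\bar r(\rho,p,n,\ua)=\bar r\bigl(\tfrac{\e}{1+\e}\ur,\,p,n,\ua\bigr)$, which is exactly the $\br$ appearing in Theorem \ref{thm2}, and the condition $n>n_o(\rho,p)$ is exactly the minimum sample-size condition in (\ref{ass12}).

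Next I would verify the remaining hypothesis of Theorem \ref{thm2}, namely $\br>\ur$. Expanding $F(n,\rho,\e)>0$ and using $\ur+2\e=\rho\tfrac{1+\e}{\e}+2\e$ gives
\[
\br-2\e \;>\; (\ur+2\e)\exp\!\Bigl[\tfrac{4\e}{\e_1}\Bigr] \;\geq\; \ur+2\e,
\]
so $\br>\ur+4\e>\ur$. Theorem \ref{thm2} therefore applies, and (\ref{ln-vorr}) holds for the chosen $\br,\ur,\e$ whenever $\a\geq\ua$.

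Finally I would apply Lemma \ref{lemma:prec}. Its separation condition $\br-2\e>\ur+2\e$ has just been verified, and its size condition (\ref{suur0}) follows by taking logarithms of the same displayed inequality:
\[
\tfrac{\e_1}{4}\ln\!\Bigl(\tfrac{\br-2\e}{\ur+2\e}\Bigr) \;>\; \e.
\]
Lemma \ref{lemma:prec} then delivers (\ref{tapsus}), as required. The only non-mechanical ingredient is knowing that the suprema defining $F(n,\rho)$ and $F(n)$ behave well enough for Steps 2--3 to produce valid choices whenever $F(n)>0$; this rests on continuity of $\bar r(\cdot,p,n,\ua)$ established in Lemma \ref{lemmaA}, together with the monotonicity of $\bar r$ in $n$ proved in Corollary \ref{kolla}. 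In short, the main ``obstacle'' is purely bookkeeping: ensuring that the algebraic definition of $F$ lines up term by term with the hypotheses of the two results it is meant to trigger.
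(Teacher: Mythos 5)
Your proof is correct and follows essentially the same route as the paper's: unpack $F(n)>0$ to obtain $\rho$ and $\e$ with $F(n,\rho,\e)>0$ and $n>n_o(\rho,p)$, check that this yields both conditions in (\ref{ass12}) so Theorem \ref{thm2} applies, and then verify the two hypotheses of Lemma \ref{lemma:prec} from the displayed inequality. The only superfluous point is your closing appeal to continuity of $\bar r$: no regularity is needed, since a positive supremum automatically supplies a witness at which the quantity is positive.
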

\begin{proof} According to definition of $F(n)$, the parameters $\rho>0$ and $\e>0$ as specified by {\it EstimationProcedure1}  are such that $F(n,\rho,\e)>0$ and $n>n_o(\rho,p)$. With $\ur=\rho{1+\e\over \e}$, we see that
$${\br-2\e\over \ur+2\e}>\exp[{4\over \e_1}\e]$$ so that the
equation (\ref{suur0}) holds. This equation also implies that $\br>\ur+4\e$. Since $n>n_o(\rho,p)$, we have $n>n_o({\e\over 1+\e}\ur,p)$. Thus both inequalities in (\ref{ass12}) hold and therefore  $n\geq n_1$, where $n_1(\e,p,\a,\ur)$ is as in Theorem \ref{thm2}. Hence  all assumptions of Theorem \ref{thm2} are fulfilled and so (\ref{ln-vorr}) holds. Both assumptions of Lemma \ref{lemma:prec} are fulfilled and so the inequality
 (\ref{tapsus}) holds as well.\end{proof}

To solve {\bf Q2},  we need to assume some minimal requirements about the given sample size $n$. In what follows, we assume that there exists $\rho_o>0$ such that $n>n_o(\rho_o,p)$, where $n_o(\rho,p)$ is as in Theorem \ref{thm}. Now we  define
 $$F(\e_1,\rho,\e):={\bar{r}\big(\rho,p,n,\ua\big)-2\e\over \rho {1+\e\over \e}+2\e}-\exp[{4\over \e_1}\e].$$
The function $F(\e_1,\rho,\e)$ also depends on $n$ and $p$, but these are fixed. As previously, define
$$F(\e_1,\rho):=\sup_{\e>0}F(\e_1,\rho,\e),\quad F(\e_1):=\sup_{\rho\geq \rho_o}F(\e_1,\rho).$$
 Now find (as small as possible) $\e_1>0$ such that $F(\e_1)>0$.
\\\\
{\bf EstimationProcedure2:}
\begin{enumerate}
  \item Given $\e_1>0$ satisfying $F(\e_1)>0$ find  $\rho\geq \rho_o$ such that  $F(\e_1,\rho)> 0$.
   \item Use $\rho$ to find $\e>0$ such that $F(\e_1,\rho,\e)>0$
  \item Use $\rho$ to determine $\ur=\ur(\rho,p,n)$, where $\br(\rho,p,n,\ua)$ is as in Corollary \ref{kolla}.
  \item Use $\e$ and $\rho$ to find  $\ur=\rho{1+\e\over \e}$.
\item Use $\ur$ and $\br$ to find $u_1$ and $u_2$ as in (\ref{u}).
  \item Estimate $\hat{r}(u)$ based on the sample of size $n$ and the estimate of characteristic function.
  \item Find $\hat{\a}$ as in (\ref{hinn}).
\end{enumerate}
\begin{theorem}\label{thm22} Let  $\ua>0$, the sample size $n>n_o(\rho_o,p)$, where $\rho_o>0$ and $p\in (0,1)$ be given.
 Let  $\e_1$  satisfy $F(\e_1)> 0$. Then the estimate $\hat{\a}$ obtained via {\rm EstimationProcedure2}  satisfies the inequality (\ref{tapsus}), provided the true parameter $\a$ satisfies the inequality $\a\geq \ua$.\end{theorem}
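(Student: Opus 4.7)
The plan is to mirror the proof of Theorem \ref{thm21} almost verbatim, since EstimationProcedure2 differs from EstimationProcedure1 only in which variable ($n$ versus $\e_1$) is fixed in advance and which is solved for; the feasibility condition $F>0$ is structurally the same. First I would unpack what EstimationProcedure2 actually delivers: by construction it produces $\rho\geq \rho_o$ and $\e>0$ with $F(\e_1,\rho,\e)>0$, which written out is
\[
\frac{\bar{r}(\rho,p,n,\ua)-2\e}{\rho\,\frac{1+\e}{\e}+2\e}>\exp\!\big[\tfrac{4}{\e_1}\e\big].
\]
Setting $\ur:=\rho\,\frac{1+\e}{\e}$ and $\br:=\bar{r}(\rho,p,n,\ua)$ and taking logarithms, this inequality rearranges to $\tfrac{\e_1}{4}\ln\bigl(\tfrac{\br-2\e}{\ur+2\e}\bigr)>\e$, which is exactly hypothesis \eqref{suur0} of Lemma \ref{lemma:prec}. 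As a by-product, since $\exp[4\e/\e_1]>1$, the same inequality also gives $\br-2\e>\ur+2\e$, so in particular $\br>\ur+4\e$, which takes care of the separation condition $\br-2\e>\ur+2\e$ required by that lemma.

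Next I would verify the two assumptions \eqref{ass12} of Theorem \ref{thm2}. With the choice $\ur=\rho\,\frac{1+\e}{\e}$ one has $\frac{\e}{1+\e}\ur=\rho$, so $\bar{r}\bigl(\frac{\e}{1+\e}\ur,p,n,\ua\bigr)=\br>\ur$ is precisely the inequality just established. For the sample-size condition $n\geq n_o\bigl(\frac{\e}{1+\e}\ur,p\bigr)=n_o(\rho,p)$, I would invoke monotonicity of $n_o(\cdot,p)$: from \eqref{tingimus3} it scales like $(1+\e)^2/\e^2$, hence is decreasing in its first argument, so $\rho\geq \rho_o$ gives $n_o(\rho,p)\leq n_o(\rho_o,p)<n$. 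Both conditions of Theorem \ref{thm2} therefore hold, and consequently \eqref{ln-vorr} applies with this $(\ur,\br)$.

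Having established \eqref{ln-vorr} and \eqref{suur0}, together with $\br-2\e>\ur+2\e$, all hypotheses of Lemma \ref{lemma:prec} are met, and the lemma delivers $P(|\hat{\a}-\a|\geq \e_1)\leq p$, which is \eqref{tapsus}. The estimate $\hat{\a}$ is obtained by steps 5--7 of EstimationProcedure2, which match the construction in \eqref{hinn}--\eqref{u}, so the conclusion applies to the output of the procedure.

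The main obstacle is purely bookkeeping: tracking that the auxiliary variable $\rho$ in the definition of $F$ plays the role of $\frac{\e}{1+\e}\ur$ in Theorem \ref{thm2}, and confirming that the fixed lower bound $\rho_o$ in the sample-size hypothesis $n>n_o(\rho_o,p)$ legitimately dominates $n_o(\rho,p)$ for every $\rho\geq\rho_o$ selected by the procedure. There is no new probabilistic content beyond what was used in Theorem \ref{thm21}; Q2 simply inverts the role of $n$ and $\e_1$ in the same feasibility condition $F>0$, and the proof translates correspondingly.
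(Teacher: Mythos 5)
Your proof is correct and follows essentially the same route as the paper: unpack $F(\e_1,\rho,\e)>0$ to obtain \eqref{suur0} and $\br>\ur+4\e$, use the monotonicity of $\rho\mapsto n_o(\rho,p)$ together with $\rho\geq\rho_o$ to verify \eqref{ass12}, then apply Theorem \ref{thm2} and Lemma \ref{lemma:prec}. You in fact spell out a few steps the paper leaves implicit (e.g.\ that $\tfrac{\e}{1+\e}\ur=\rho$ and that $\exp[4\e/\e_1]>1$ yields the separation condition), but the argument is the same.
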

\begin{proof} According to definition of $F(\e_1)$, the parameters $\rho>0$ and $\e>0$ as specified by {\it EstimationProcedure2} are such that $F(\e_1,\rho,\e)>0$. Since $\rho \mapsto n_o(\rho,p)$ is decreasing, it holds that   $n>n_o(\rho,p)$. With $\ur=\rho{1+\e\over \e}$, we see that  equation (\ref{suur0}) holds. This equation also implies that $\br>\ur+4\e$ and hence  both inequalities in (\ref{ass12}) hold and therefore  $n\geq n_1$, where $n_1(\e,p,\ua,\ur)$ is as in Theorem \ref{thm2}. Hence  all assumptions of Theorem \ref{thm2} are fulfilled and so (\ref{ln-vorr}) holds. Then Lemma \ref{lemma:prec} implies that
 (\ref{tapsus}) holds as well.\end{proof}
\section{An alternative construction}\label{sec:alternative}
Recall the construction of the  $\br$ in the proof of Theorem \ref{thm}: the key of the construction is the large deviation inequality
\begin{equation}\label{ineq}
P\Big(e^{\br}\sup_{u\in (0,\u]}|\v_n(u)-\v(u)|>\e\Big)\leq A\exp[-n\cdot k(\a,\e,\br)],
\end{equation}
where $A=2$ and $k(\a,\e,\br)>0$. Then $r_n(\a)$ was defined as the solution of the equality
\begin{equation}\label{eq}
A\exp[-n\cdot k(\a,{\e\over 1+\e},r_n)]=p\end{equation}
and so the the desired bound $\br=\inf_{\a \in (0,\a]}r_n(\a)$ was obtained. In order (\ref{eq}) to have positive solution, the sample size $n$ must satisfy $n>n_o(\e,p)$.
The large deviation bound (\ref{eq}) constructed in Section \ref{subsec1:main} is not the one possible option. We now sketch another possible construction yielding to a different inequality, and therefore, also to the different function $r_n(\a)$ and different bound $\br$.
Unlike the $\br$ obtained in Section \ref{subsec1:main}, the new  $\br$ has  more explicit form. The price for it is much bigger required sample sice $n_o$.
\\\\
Observe that by (\ref{3osa2}) and (\ref{Ka}) the inequality
$\|F_n-F\|\leq {\delta\over 8}$ implies that
 $$|\v_n(u)-\v(u)|\leq \delta+ 2\|F_n-F\| \Big({ 8Lr(u)\over \delta }\Big)^{1\over \alpha}.$$
 Hence for every $\u>0$, \begin{align*}
\{e^{r(\u)}\sup_{u\in(0,\u]}|\varphi(u)-\v_n(u)|>\e\}\subset
\{\|F_n-F\|> {\delta\over 8}\}\cup\Big\{2\|F_n-F\|\Big({
8Lr(\u)\over \delta }\Big)^{1\over \alpha} \geq \e e^{-r(\u)}-\delta
\Big\}.
\end{align*}
{Recall $\be=\e e^{-\br}$ and $\br=r(\u)$.} Hence, we obtain
\begin{align}\nonumber
P\Big(e^{\br}\sup_{u\in (0,\u]}|\v_n(u)-\v(u)|>\e\Big)&\leq
P\Big(\|F_n-F\|> {\delta\over 8}\Big)+P\Big(\|F_n-F\|\geq {(\be
-\delta)\delta^{1\over \a}\over 2( 8L\br )^{1\over
\alpha}}\Big).
\\\label{summa}
 & \leq 2\exp[-n\cdot {\delta^2\over
32}]+2\exp\Big[-n\cdot{(\be -\delta)^2\delta^{2\over \a}\over
2( 8L\br )^{2\over \alpha}}\Big].
\end{align}
Choose
$$\delta={\be\over 1+\a}=\arg\max_{\delta \in [0,\be]}(\be-\delta)^2\delta^{2\over \alpha}.$$
Thus plugging $\delta$ into (\ref{summa}), we obtain with $ s=2(1+{1\over \alpha})$,
\begin{align*}
P\Big(e^{\br}\sup_{u\in (0,\u]}|\v_n(u)-\v(u)|>\e\Big)& \leq
2\exp[-n\cdot {\be^2\over 32(1+\a)^2}]+2\exp\Big[-n\cdot
\Big(\big({\be\over 1+\a}\big)^s{\alpha^2\over
2(8L\br)^{2\over \alpha}}\Big)\Big]\\
&\leq 4\exp[-n\cdot k(\a,\e,\br)],
\end{align*}
where
$$k(\a,\e,\bar{r})=\big({\e^2e^{-2\bar{r}}\over
32(1+\a)^2}\big)\wedge \Big({\e^s e^{-s\bar{r}}\over 2(1+\a)^s}{\alpha^2\over
(\bar{r}8L)^{2\over \alpha}}\Big).$$
The inequality (\ref{eq}) holds with an equality, when
 $${\bar r}=r_{1,n}(\a)\wedge r_{2,n}(\a)$$
and $r_{1,n}$, $r_{2,n}$ are solutions of the following equalities:
\begin{align}\label{v1}
\exp[-2r_{1,n}]&={\ln (4/p)\over n}\big({1+\e\over \e}\big)^2 32(1+\a)^2
\\\label{v2} {d(\a) \exp[-s r_{2,n}]\over
(r_{2,n})^{2\over \alpha}}&={\ln (4/p)\over n}\big({1+\e\over
\e}\big)^s,\quad \text{where  } d(\a)={\a^2\over 2(1+\a)^s}{1\over
(8L)^{2\over \a}}
\end{align}
Thus
$$r_{1,n}(\a)=-{1\over 2} \ln \Big[{\ln (4/p)\over n}\big({1+\e\over \e}\big)^2
32(1+\a)^2\Big]$$
and $r_{1,n}(\a)>0$ holds when
\begin{equation}\label{tingimus}
n>{\ln (4/p)}\big({1+\e\over \e}\big)^2 32(1+\a)^2.
\end{equation}
From (\ref{tingimus}), we obtain necessary sample size
\begin{equation}\label{tingimus2}
n_o(\e,p)={\ln (4/p)}\big({1+\e\over \e}\big)^2\cdot 288.
\end{equation}
The solution of equality (\ref{v2}) is
$$r_{2,n}(\a)={1\over \a+1}W((\a+1)g_n(\a)),$$
where $W$ is Lambert's W-function and $g_n$ is defined as follows
$$g_n(\a)={\a^{\a}\over 2^{\a\over 2}(\a+1)^{\a+1}8L}\Big({n\over \ln (4/p)}\Big)^{\a\over 2}\Big({\e\over 1+\e}\Big)^{\a+1}.$$
Observe that  $\lim_{\a\to 0} g_n(\a)={1\over 8L}{\e\over 1+\e}=:g_o$ and $\lim_{\a\to 0}r_{2,n}(\a)=W(g_o)\in (0,1).$ It can be shown that
\begin{equation}\label{ur}
\br(\e,p,n)=\min_{\a}\{r_{1,n}(\a),r_{2,n}(\a)\}=\ln {\kappa}-\ln \Big(12 \vee {\text{{${1+\a_2\over \a_2}$}}}\Big)>0,
\end{equation}
where
$\kappa=\frac{\e}{\e+1}\sqrt{{\frac{n}{2\ln(4/p)}}}$
while {${r}_{1,n}>0$ only if $\kappa>12$ and $n>n_0$, where $n_o$ is given by \eqref{tingimus2}, and  $\a_2\in(0,2]$} is the solution of the equality
$r_{2,n}(\a)=\ln \kappa\frac{\a}{1+\a}.$

Since $\lim_{\a\to 0}r_{2,n}(\a)=W(g_o)$, it holds that for any $n$   $\br<1$, and so the alternative construction satisfies our conjecture. However, it is possible to show the for any $\ua>0$,
$$\bar{r}(\e,p,n,\ua)=\inf_{\a\in (\ua,2]}r_n(\a)\to \infty.$$
So we have another construction that  confirms our conjecture that the universal bound $\br$ satisfies the inequality $\br<1$ and the universal bound $\u$ does not exists.
\section{Comparison of $r_n$ and $\br$}
Fix $\e=0.1$, $p=0.1$ and $L=1/2$. We  find the  values of $r_n(\a)$  obtained in Section \ref{subsec1:main}  as follows: first we numerically  find $x'$ as the solution of the equality $f(x)=e\mathrm{e}^{-r}$, where $f(x)$ is given by \eqref{fx}, then we calculate $k(\a,\e,r)$ given by \eqref{k3} and then $r_n$ can be found as solution of \eqref{r3def}. We compare the obtained values of $r_n(\a)$ with the ones obtained in Section \ref{sec:alternative}: $r_n(\a)=\min\{r_{1,n}(\a),r_{2,n}(\a)\}$, where  $r_{1,n}(\a)$  is the solution of equality (\ref{v1}) and $r_{2,n}(\a)$  is the solution of equality (\ref{v2}). Note that by  \eqref{tingimus2} we have ${r}_{1,n}(\a)>0$ only if  $n>n_o\approx 128550$.
Figure \ref{fig4} plots both constructions of $r_n$ versus $\a\in(0.01, 2]$ for different sample sizes $n$.
\begin{figure}[!htp] \centering{\includegraphics[height=0.25\textheight, width=1\textwidth]{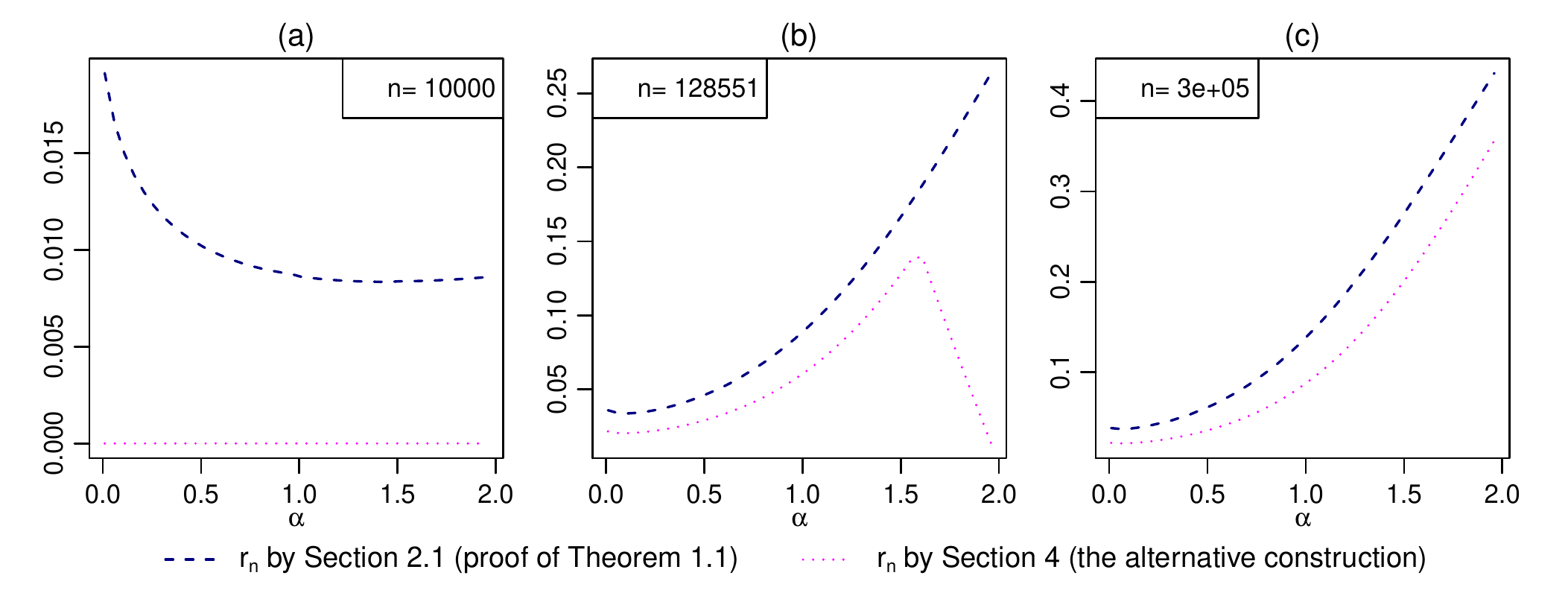}}\caption{The values of $r_n$ vs $\alpha$ with $\e=0.1$, $p=0.1$ and $n=1000$ in (a), $n=128551>n_o$ in (b) and $n=300000$ in (c).}\label{fig4}
\end{figure}Obviously, in Figure \ref{fig4} (a) the $r_n(a)$ obtained by the alternative construction in Section \ref{sec:alternative} is $0$ because $n<n_o$.  However, the values of $r_n$ obtained by  Section \ref{subsec1:main} are small but positive.
\begin{figure}[!htp] \centering{\includegraphics[height=0.25\textheight, width=1\textwidth]{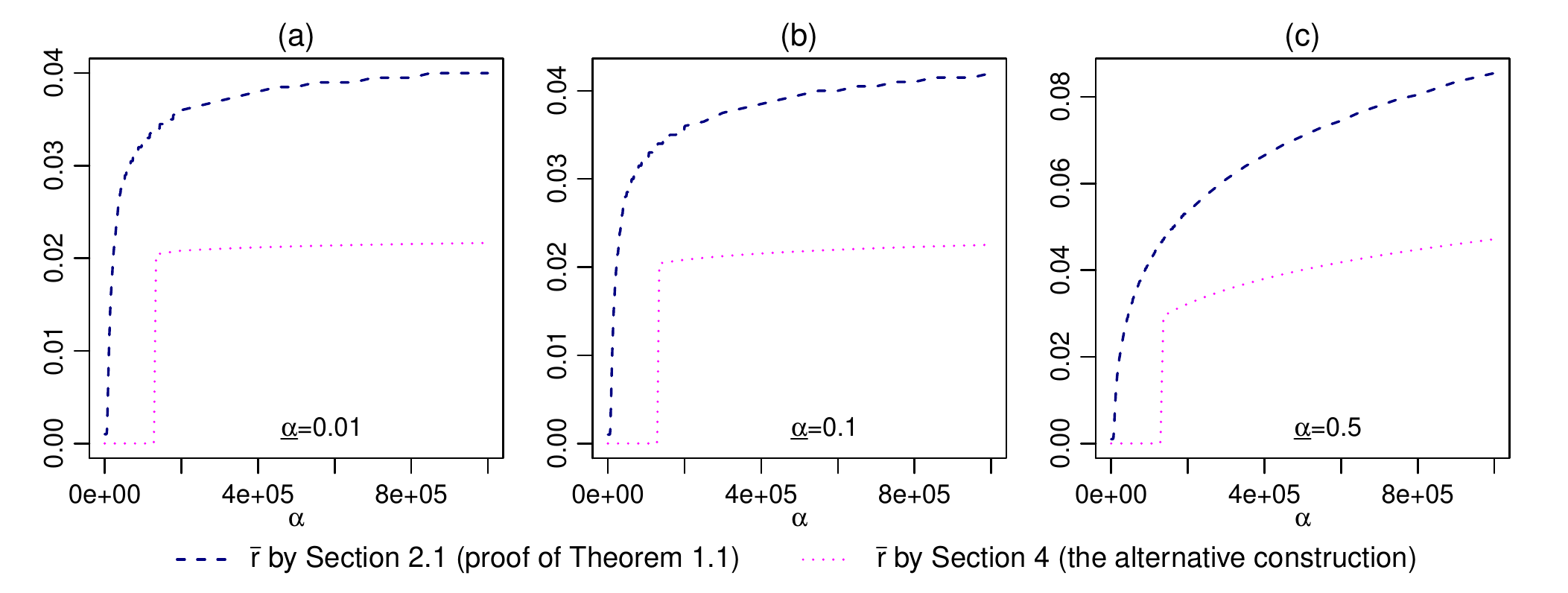}}\caption{The values of $\bar{r}$ vs $n$ with $\e=0.1$, $p=0.1$ and  $\ua=0.01$ in (a), $\ua=0.1$ in (b) and $\ua=0.5$ in (c).}\label{fig5}
\end{figure}
  In Figure \ref{fig4} (b) both constructions give positive results (because $n=n_o+1$) while  alternative construction gives smaller (more conservative) values, with drop after $\a=1.5$ (and minimum over $\a$ is at $\a=2$). In Figure \ref{fig4} (c) the large sample size such as $n=3\cdot 10^5$ is used and both $r_n(\a)$ behave in concordance while the alternative construction yields smaller values for all values of $\a$. Next we  compare the values of $\br=\min_{\a\in(\ua,2]}r_n(\a)$ obtained by Section \ref{subsec1:main} and Section \ref{sec:alternative}.  Figure \ref{fig5} plots $\br$  versus $n$ for   different lower limits $\ua$.  It is clearly evident from Figure \ref{fig5} that $\br$ is increasing in $n$ (while alternative construction requires $n>n_o$). Setting lower limit from  $\ua=0.1$ to  $\ua=0.5$ increases the  values of $\br$ approximately twice. All in all, in our example the construction given by  Section \ref{subsec1:main} yields much bigger (less conservative) values of $\br$ than the more explicit form construction given by Section \ref{sec:alternative}.

\bibliography{References_Krutto}

\newcommand{\noop}[1]{}
\begin{thebibliography}{}

\bibitem[\protect\citeauthoryear{Bibalan, Amindavar, and
  Amirmazlaghani}{Bibalan et~al.}{2017}]{Bibalan2017CharacteristicFB}
Bibalan, M.~H., H.~Amindavar, and M.~Amirmazlaghani (2017).
\newblock Characteristic function based parameter estimation of skewed
  alpha-stable distribution: An analytical approach.
\newblock {\em Signal Process.\/}~{\em 130}, 323--336.

\bibitem[\protect\citeauthoryear{Cs\"{o}rgo}{Cs\"{o}rgo}{1981}]{Csorgo1981}
Cs\"{o}rgo, S. (1981).
\newblock Limit behaviour of the empirical characteristic function.
\newblock {\em The Annals of Probability\/}~{\em 9\/}(1), 130--144.

\bibitem[\protect\citeauthoryear{Kakinaka and Umeno}{Kakinaka and
  Umeno}{2020}]{kakinaka2020flexible}
Kakinaka, S. and K.~Umeno (2020).
\newblock Flexible two-point selection approach for characteristic
  function-based parameter estimation of stable laws.
\newblock Online at \url{https://arxiv.org/abs/2005.11499}.

\bibitem[\protect\citeauthoryear{Kogon and Williams}{Kogon and
  Williams}{1998}]{Kogon1998}
Kogon, S.~M. and D.~B. Williams (1998).
\newblock Characteristic function based estimation of stable distribution
  parameters.
\newblock In R.~J. Adler, R.~E. Feldman, and M.~S. Taqqu (Eds.), {\em A
  Practical Guide to Heavy Tails}, pp.\  311--335. Boston: Birkh\"{a}user.

\bibitem[\protect\citeauthoryear{Krutto}{Krutto}{2016}]{Krutto2016}
Krutto, A. (2016).
\newblock Parameter estimation in stable law.
\newblock {\em Risks\/}~{\em 4\/}(4), 43.

\bibitem[\protect\citeauthoryear{Krutto}{Krutto}{2018}]{Krutto2018}
Krutto, A. (2018).
\newblock Empirical cumulant function based parameter estimation in stable
  laws.
\newblock {\em Acta et Commentationes Universitatis Tartuensis de
  Mathematica\/}~{\em 22\/}(2), 311--338.

\bibitem[\protect\citeauthoryear{McCulloch}{McCulloch}{1996}]{McCulloch1986}
McCulloch, J.~H. (1996).
\newblock Financial applications of stable distributions.
\newblock In G.~Maddala and C.~Rao (Eds.), {\em Statistical Methods in
  Finance}, Volume~14 of {\em Handbook of Statistics}, pp.\  393 -- 425.
  Elsevier.

\bibitem[\protect\citeauthoryear{Nolan}{Nolan}{2001}]{Nolan2001}
Nolan, J.~P. (2001).
\newblock Maximum likelihood estimation and diagnostics for stable
  distributions.
\newblock In O.~Barndorff-Nielsen, S.~Resnick, and T.~Mikosch (Eds.), {\em
  L{\'e}vy Processes}, pp.\  379--400. Boston: Birkh\"{a}user.

\bibitem[\protect\citeauthoryear{Nolan}{Nolan}{2018}]{Nolan2018}
Nolan, J.~P. (2018).
\newblock {\em Stable Distributions - Models for Heavy Tailed Data}.
\newblock Boston: Birkh\"{a}user.
\newblock In progress, Chapter 1 online at
  \url{http://fs2.american.edu/jpnolan/www/stable/stable.html}.

\bibitem[\protect\citeauthoryear{Paulson, Holcomb, and Leitch}{Paulson
  et~al.}{1975}]{Paulson1975}
Paulson, A.~S., E.~W. Holcomb, and R.~A. Leitch (1975).
\newblock The estimation of the parameters of the stable laws.
\newblock {\em Biometrika\/}~{\em 62\/}(1), 163--170.

\bibitem[\protect\citeauthoryear{Press}{Press}{1972}]{Press1972}
Press, J.~S. (1972).
\newblock Estimation in univariate and multivariate stable distributions.
\newblock {\em J. Amer. Statist. Assoc.\/}~{\em 67\/}(340), 842--846.

\bibitem[\protect\citeauthoryear{{Robust Analysis Inc.}}{{Robust Analysis
  Inc.}}{2017}]{STABLER}
{Robust Analysis Inc.} (2017).
\newblock {\em STABLE 5.3 R Version for Windows}.
\newblock Washington, DC, USA: {Robust Analysis Inc.}
\newblock \url{http://www.robustanalysis.com/}.

\bibitem[\protect\citeauthoryear{Samorodnitsky and Taqqu}{Samorodnitsky and
  Taqqu}{1994}]{Samorodnitsky1994}
Samorodnitsky, G. and M.~S. Taqqu (1994).
\newblock {\em Stable Non-Gaussian Random Processes: Stochastic Models with
  Infinite Variance}.
\newblock New York: Chapman \& Hall.

\bibitem[\protect\citeauthoryear{{van der Vaart}}{{van der
  Vaart}}{1998}]{van1998asymptotic}
{van der Vaart}, A. (1998).
\newblock {\em Asymptotic Statistics}.
\newblock Cambridge: Cambridge University Press.

\bibitem[\protect\citeauthoryear{Zolotarev}{Zolotarev}{1986}]{zolotarev1986}
Zolotarev, V. (1986).
\newblock {\em One-dimensional Stable Distributions}, Volume~65 of {\em
  Translations of mathematical monographs}.
\newblock American Mathematical Society.

\end{thebibliography}

\end{document}